\numberwithin{equation}{section}
\newtheorem{prop}{Proposition}
\newtheorem{theo}[prop]{Theorem}
\newtheorem{lemm}[prop]{Lemma}
\newtheorem{coro}[prop]{Corollary}
\newtheorem{rema}[prop]{Remark}
\newtheorem*{Aleks}{Lemma (Aleksandrov)}
\newtheorem*{KS}{Theorem (Krylov--Safonov)}
\theoremstyle{definition}
\newtheorem*{pflemC0}{Proof of Lemma~\ref{le:C0}}
\newtheorem*{pflemC2}{Proof of Lemma~\ref{le:C2}}
\newtheorem*{ack}{Acknowledgment}
\newcommand{\td}{\tilde}
\newcommand{\p}{\partial}
\newcommand{\fr}{\frac{\sqrt{-1}}{2}}
\newcommand{\ppr}{(\sqrt{-1}/2) \partial \bar{\partial}}
\newcommand{\dm}{\textup{diam}}
\def\lab{\label}
\begin{document}
\title[Form--type equations]{Form-type Calabi-Yau equations on K\"ahler manifolds of nonnegative orthogonal bisectional curvature}
\author{Jixiang Fu}
\address{Institute of Mathematics\\ Fudan University \\ Shanghai
200433, China} \email{majxfu@fudan.edu.cn}
\author{Zhizhang Wang}
\address{Institute of Mathematics\\ Fudan University \\ Shanghai
200433, China} \email{youxiang163wang@163.com}
\author{Damin Wu}
\address{Department of Mathematics \\
         The Ohio State University \\
         1179 University Drive, Newark, OH 43055, U.S.A.}
\email{dwu@math.ohio-state.edu}

\begin{abstract}
  In this paper we prove the existence and uniqueness of the form-type Calabi-Yau
   equation on K\"ahler manifolds of nonnegative orthogonal bisectional curvature.
\end{abstract}

\maketitle

\section{Introduction}
In the previous paper \cite{FuWangWu}, we introduced the form--type Calabi--Yau equation on
a compact complex $n$-dimensional manifold with a balanced metric and with a non-vanishing holomorphic
$n$-form $\Omega$. A balanced metric $\omega$ on $X$ is a hermitian metric such that $d\omega^{n-1}=0$. Given a
 balanced metric $\omega_0$ on $X$, let us denote by $\mathcal P(\omega_0)$ the set of all smooth real
  $(n-2,n-2)$--forms $\psi$ such that
$\omega_0^{n-1}+\frac{\sqrt{-1}}{2}\partial\bar\partial\psi>0$ on $X$. Then, for each $\varphi\in\mathcal P(\omega_0)$, there exists a balanced metric, which we denote by $\omega_{\varphi}$, such that $\omega_{\varphi}^{n-1}=\omega_0^{n-1}+\frac{\sqrt{-1}}{2}\partial\bar\partial\varphi$. We say that such a metric $\omega_\varphi$ is in the balanced class of $\omega_0$.
  Our aim is to find  a balanced metric $\omega_\varphi$ in the balanced class of $\omega_0$
such that
\begin{equation}\lab{101}
\|\Omega\|_{\omega_\varphi}=\textup{a constant}\ C_0>0.
\end{equation}
The geometric meaning of such a metric is that its Ricci curvatures
of the hermitian connection and the spin connection are zero. On the
other hand, the direct non--K\"ahler analogue of the Calabi
conjecture has recently been solved by Tosatti--Weinkove~\cite{TW}
(see also \cite{GL}, and the references in \cite{TW, GL}.). In general their solutions provides hermitian
Ricci-flat metrics which are not balanced.

As in the K\"ahler case, equation (\ref{101}) can be reformulated in the following form
\begin{equation} \label{eq:FT0}
   \frac{\omega_\varphi^n}{\omega_0^n} = e^f \frac{\int_X \omega_\varphi^n }{\int_X \omega_0^n},
\end{equation}
where $f \in C^{\infty}(X)$ is given and satisfies  the compatibility condition:
\begin{equation}\label{eq:cond}
   \int_X e^f \omega_0^n = \int_X \omega_0^n.
\end{equation}
We would like to find a solution $\varphi\in \mathcal{P}(\omega_0)$.
The equation \eqref{eq:FT0} is called a \emph{form-type Calabi--Yau
equation}, a reminiscent of the classic function type Calabi--Yau
equation.  We have constructed solutions for (\ref{101}) when $X$ is
a complex torus~\cite{FuWangWu}. A natural approach to solve
\eqref{eq:FT0} is to use the continuity method. The openness and
uniqueness were discussed in the previous work~\cite{FuWangWu}. We
do not know whether there is a geometric obstruction for solving
\eqref{eq:FT0} in general.

Equation \eqref{eq:FT0} is still meaningful on a compact complex manifold with a balanced metric, whose canonical line bundle is not holomorphically trivial. Geometrically, solving \eqref{eq:FT0} allows us to solve the problem of prescribed volume form on $X$, in the balanced  class of each balanced metric on $X$. Namely, given any positive $(n,n)$--form $W$ on $X$ and a balanced metric $\omega_0$, we let
\[
   e^f = \left(\frac{W}{\omega_0^n}\right) \frac{\int_X \omega_0^n}{\int_X W};
\]
then by solving \eqref{eq:FT0} we are able to find a metric $\omega_\varphi$ in the balanced class of $\omega_0$ such that $\omega_\varphi^n$ is equal to $W$, up to a constant rescaling.

It seems to us very hard to understand equation \eqref{eq:FT0} in general. In this paper, we want to look for solutions in a subset within the balanced class of a given balanced metric. The idea is, in some sense, to transfer the form-type Calabi--Yau equation to a function type equation. 

So in the following we let $(X, \eta)$ be an $n$-dimensional K\"ahler manifold, $n \ge 2$, and $\omega_0$ be a balanced metric on $X$. We let
\[
   \mathcal{P}_{\eta}(\omega_0) = \{ v \in C^{\infty}(X)\mid \omega_0^{n-1} + \ppr v \wedge \eta^{n-2}  >0 \quad \textup{on $X$}\}.
\]
For each $v \in \mathcal{P}_{\eta}(\omega_0)$, we denote by $\omega_v$ the positive $(1,1)$--form on $X$ such that
\[
   \omega_v^{n - 1} = \omega_0^{n-1} + \ppr v \wedge \eta^{n-2} \qquad \textup{on $X$}.
\]
Then we consider the equation
\begin{equation} \label{eq:FT}
   \frac{\omega_u^n}{\omega_0^n} = e^f \frac{\int_X \omega_u^n }{\int_X \omega_0^n},
\end{equation}
where $f \in C^{\infty}(X)$ is given and satisfies  the compatibility condition \eqref{eq:cond}.

Obviously, the function-type equation~\eqref{eq:FT} is a special
case of the form-type equation~\eqref{eq:FT0}. However, an important
observation is that, solving \eqref{eq:FT} will enable one to find
all the solutions to \eqref{eq:FT0}, in the balanced class of a
given balanced metric (see Remark~\ref{re:idea}).

In this paper, we are able to solve \eqref{eq:FT}, under the assumption that the K\"ahler metric $\eta$ has \emph{nonnegative orthogonal bisectional curvature}; that is,
for any orthonormal tangent frame $\{ e_1, \ldots , e_n\}$ at any $x\in M$, the curvature tensor of $\eta$ satisfies that
\begin{equation} \label{eq:nonbis}
   R_{i\bar{i}j\bar{j}} \equiv R (e_i, \bar{e}_i, e_j, \bar{e}_j) \geq 0, \qquad \textup{for all $1 \le i,j \le n$ and $i \ne j$}.
\end{equation}
We remark that nonnegativity of the orthogonal bisectional curvature
is weaker than nonnegativity of the bisectional curvature. In fact,
the former condition are satisfied by not only complex projective
spaces and the Hermitian symmetric spaces, but also some compact
K\"ahler manifolds of dimension $\ge 2$ whose holomorphic sectional
curvature is strictly negative somewhere. We refer the reader to the
recent work Gu--Zhang~\cite{GZ} for the study of nonnegative
orthogonal bisectional curvature, which generalizes the earlier work of Mok
\cite{M} and Siu--Yau \cite{SY}.

Our main result is as follows:
\begin{theo} \label{th:main}
  Let $(X,\eta)$ be a compact K\"ahler manifold of nonnegative orthogonal bisectional curvature, and $\omega_0$ be a balanced metric on $X$. Then, for any smooth function $f$ on $X$, equation~\eqref{eq:FT} admits a solution $u \in \mathcal{P}_{\eta}(\omega_0)$, which is unique up to a constant.
\end{theo}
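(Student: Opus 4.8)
The plan is to prove existence by the continuity method; uniqueness is the easy direction, and follows as in \cite{FuWangWu} from the maximum principle: if $u_1,u_2\in\mathcal P_\eta(\omega_0)$ both solve \eqref{eq:FT}, then $\omega_{u_1}^{n-1}-\omega_{u_2}^{n-1}=\frac{\sqrt{-1}}{2}\partial\bar\partial(u_1-u_2)\wedge\eta^{n-2}$, and comparing the two equations at an interior extremum of $u_1-u_2$ forces $u_1-u_2$ to be constant. For existence, the first step is to convert \eqref{eq:FT} into a scalar fully nonlinear equation for $u$ alone. A computation with the Hodge star $\star_\eta$ of $\eta$ --- using $\partial\eta=\bar\partial\eta=0$ and the identity $\star_\eta(\alpha\wedge\eta^{n-2})=c_n\bigl[(\operatorname{tr}_\eta\alpha)\,\eta-\alpha\bigr]$ for real $(1,1)$-forms $\alpha$, with $c_n>0$ --- shows that
\[
  \Theta_u:=\star_\eta\!\bigl(\omega_u^{n-1}\bigr)=\chi_0+c_n\bigl[(\operatorname{tr}_\eta\vartheta_u)\,\eta-\vartheta_u\bigr],\qquad \vartheta_u:=\tfrac{\sqrt{-1}}{2}\partial\bar\partial u,\quad \chi_0:=\star_\eta\!\bigl(\omega_0^{n-1}\bigr),
\]
that $\chi_0>0$, that $\Theta_u>0$ precisely when $u\in\mathcal P_\eta(\omega_0)$, and that $\det_\eta\Theta_u$ equals a positive constant times $(\det_\eta\omega_u)^{n-1}$ (the eigenvalues of $\Theta_u$ relative to $\eta$ being a constant times $\prod_{j\ne i}\lambda_j$, where $\lambda_1,\dots,\lambda_n$ are those of $\omega_u$). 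Writing \eqref{eq:FT} as $\omega_u^n=b\,e^f\omega_0^n$, with the constant $b:=\int_X\omega_u^n/\int_X\omega_0^n>0$ carried as an auxiliary unknown, the equation becomes
\[
  \log\det\nolimits_\eta\Theta_u=(n-1)\bigl(f+\log b\bigr)+g_0,\qquad \Theta_u>0,
\]
for a fixed function $g_0$. Since $u\mapsto\Theta_u$ is affine in the complex Hessian of $u$ and $\log\det$ is concave on positive Hermitian matrices, this is a concave equation; it is elliptic at every admissible $u$ --- the symbol of its linearization is positive definite, with eigenvalues proportional to $\sum_{j\ne i}\mu_j^{-1}>0$, where $\mu_1,\dots,\mu_n$ are the eigenvalues of $\Theta_u$ relative to $\eta$ --- and uniformly elliptic once $\operatorname{tr}_\eta\omega_u$ is under control. (For $n=2$ this degenerates to the classical Calabi--Yau equation on a Kähler surface and the theorem is Yau's, so assume $n\ge3$.)

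I would then run the continuity method on $\log\det_\eta\Theta_{u_t}=(n-1)(f_t+\log b_t)+g_0$, $t\in[0,1]$, with $f_t=tf+c_t$ ($c_t$ chosen so that \eqref{eq:cond} holds for $f_t$, whence $f_0=0$ and $f_1=f$) and $b_t=\int_X\omega_{u_t}^n/\int_X\omega_0^n$, and let $T=\{\,t\in[0,1]:\text{the $t$-equation has a solution }u_t\in\mathcal P_\eta(\omega_0)\,\}$. Then $0\in T$, with $u_0\equiv0$ and $b_0=1$; openness of $T$ follows from the implicit function theorem on Hölder spaces, normalizing $u$ by $\int_X u\,\eta^n=0$ and using $b_t$ to absorb the one-dimensional cokernel of the linearized operator, exactly as in \cite{FuWangWu}. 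Everything therefore reduces to a priori $C^k$ bounds, depending only on $(X,\eta,\omega_0)$ and $\|f\|_{C^k}$, for solutions $(u_t,b_t)$ of the $t$-equation. For the zero-order estimate, one normalizes $\sup_X u_t=0$ and bounds $-\inf_X u_t$, together with the constant $b_t$, by applying the Aleksandrov (Alexandrov--Bakelman--Pucci) maximum principle on a coordinate ball about a near-minimum point of $u_t$, as in the study of the $(n-1)$-plurisubharmonic Monge--Ampère equation \cite{TW}.

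The heart of the proof --- and the step I expect to be the main obstacle --- is the second-order estimate: an upper bound $\operatorname{tr}_\eta\omega_u\le C$ depending only on $\|u\|_{C^0}$ and $\|f\|_{C^2}$. Following Yau's method for the Calabi conjecture, one applies the maximum principle to a quantity of the form $\log(\operatorname{tr}_\eta\omega_u)-Au$ with $A$ large, using the linearized operator of the reformulated equation. After differentiating the equation and using Cauchy--Schwarz to absorb the third-order terms, the remaining obstructing terms are curvature terms of $\eta$, and --- precisely because of the special algebraic form $\Theta_u=\chi_0+c_n[(\operatorname{tr}_\eta\vartheta_u)\eta-\vartheta_u]$ --- these enter only through the components $R_{i\bar i j\bar j}$ with $i\ne j$, that is, through the orthogonal bisectional curvature of $\eta$, with the sign controlled exactly by hypothesis \eqref{eq:nonbis}. (As in Yau's estimate the first-order terms are absorbable, so no separate gradient estimate is needed.) Once $\operatorname{tr}_\eta\omega_u$ is bounded above, this bound and the equation keep the eigenvalues of $\Theta_u$ in a fixed compact subset of $(0,\infty)$, so solutions stay in $\mathcal P_\eta(\omega_0)$ and the equation remains uniformly elliptic and concave; the complex Evans--Krylov theorem (which rests on the Krylov--Safonov estimates) then yields a uniform $C^{2,\alpha}$ bound, and differentiating the equation and applying the Schauder estimates bootstraps to uniform $C^k$ bounds for all $k$. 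Hence $T$ is closed, so $T=[0,1]$, and the solution at $t=1$ is the desired $u\in\mathcal P_\eta(\omega_0)$.
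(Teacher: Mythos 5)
Your overall architecture --- continuity method, curvature-dependent second-order bound via the maximum principle, Evans--Krylov for $C^{2,\alpha}$, Schauder bootstrap, openness by the implicit function theorem as in \cite{FuWangWu}, uniqueness by the maximum principle --- matches the paper's, and your reformulation via the Hodge star $\Theta_u=\star_\eta\omega_u^{n-1}$ is simply the coordinate-free version of the paper's index convention $(\Psi_u)_{p\bar q}$ for the $(n-1,n-1)$-form $\omega_u^{n-1}$. The one place where you take a genuinely different route is the zero-order estimate. The paper does not attempt a direct ABP bound: it first proves the second-order estimate in the precise linear form $\Delta_\eta u\le C\bigl(1+u-\inf_X u\bigr)$ (Lemma~\ref{le:C2}), pairs it with the trivial lower bound $\Delta_\eta u>-C$ coming from positivity of $\omega_u^{n-1}$, and then runs Moser iteration on this pair of differential inequalities (Lemma~\ref{le:C0}, via Propositions~\ref{pr:sup} and \ref{pr:inf}) to close the $C^0$ bound; the linearity in $u-\inf_X u$ is exactly what makes the iteration close. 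You instead propose a direct Aleksandrov--Bakelman--Pucci $C^0$ estimate and only then the second-order bound. That route is plausible --- the map $\vartheta\mapsto(\operatorname{tr}_\eta\vartheta)\eta-\vartheta$ is Loewner-monotone, so on the lower contact set one can convert a bound on $\det\Theta_u$ into a bound on $\det(u_{i\bar j}+\epsilon\delta_{ij})$ and then on $\det D^2 u$ via the inequality the paper itself proves in Proposition~\ref{pr:cxreal} --- but it requires this extra algebra, which you would need to supply; note also that the reference \cite{TW} in this paper is the 2010 Hermitian Monge--Amp\`ere paper, which uses a Moser/Cherrier-type argument rather than ABP for the $C^0$ bound, so the precedent you have in mind is a later Tosatti--Weinkove work. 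The paper's ordering has the advantage of being self-contained given that the second-order estimate must be proved anyway, and it sidesteps the localization issues of ABP on a compact manifold. Two smaller remarks: the paper's maximum-principle quantity is $\operatorname{tr}_\eta\Theta_u-Au$, i.e.\ $\Delta_\eta u$ plus a fixed function, minus $Au$, with no logarithm --- equivalent in spirit to but algebraically cleaner than your $\log(\operatorname{tr}_\eta\omega_u)-Au$, since $\operatorname{tr}_\eta\Theta_u$ is affine in the complex Hessian of $u$; and your uniqueness sketch (``comparing the equations at an extremum of $u_1-u_2$'') is the right idea but needs the intermediate step, carried out in the paper, of first showing the multiplicative constants agree and then applying the strong maximum principle to a genuinely elliptic linear operator obtained from the arithmetic--geometric mean inequality.
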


\begin{rema} \label{re:idea}
Recall that our aim is to find all solutions of
equation \eqref{eq:FT0} in the balanced class of a given balance
metric $\omega_0$ for a given $f\in C^\infty(X)$ satisfying the
compatibility condition \eqref{eq:cond}. By Theorem~\ref{th:main},
we achieve this goal on a K\"ahler manifold $X$ of nonnegative
orthogonal bisectional curvature. \textup{(}In particular, the
form-type equation on a complex torus is completely settled.\textup{)}

Indeed, for any $\varphi \in \mathcal{P}(\omega_0)$ and any $f\in
C^{\infty}(X)$ satisfying \eqref{eq:cond}, we claim that there
exists a unique smooth function $u_{\varphi}$ up to a constant such
that $(\varphi + u_{\varphi} \wedge \eta^{n-2})$ is in
$\mathcal{P}(\omega_0)$ and
solves \eqref{eq:FT0}; namely, if we denote by $\omega_{u_\varphi}$
the positive $(1,1)$--form such that
\[
    \omega_{u_{\varphi}}^{n-1} = \omega_0^{n-1}+\frac{\sqrt{-1}}2\partial\bar\partial\varphi
+\frac{\sqrt{-1}}2\partial\bar\partial u_\varphi\wedge\eta^{n-2} >0,
\]
then,
\begin{equation} \label{eq:102}
   \frac{\omega_{u_\varphi}^n}{\omega_0^n}=e^{f}\frac{\int_{X}\omega_{u_\varphi}^n}{\int_{X}\omega_0^n}.
\end{equation}
To see this, we define a function $f_{\varphi} \in C^{\infty}(X)$ by
\[
        e^{f_\varphi}=e^f\frac{\omega_0^n}{\omega_{\varphi}^n}\frac{\int_X\omega_{\varphi}^n}{\int_X\omega_0^n}.
\]
Then, $f_{\varphi}$ satisfies
\begin{equation*}
    \int_{X}e^{f_\varphi}\omega_\varphi^n=
    \frac{\int_{X}e^f\omega_0^n\int_X\omega_{\varphi}^n}{\int_X\omega_0^n}
        =\int_{X}\omega_\varphi^n.
\end{equation*}
Applying Theorem~\ref{th:main} to $\omega_{\varphi}$ and $f_{\varphi}$ yields that, there exists
a unique solution $u_\varphi\in\mathcal P_\eta(\omega_\varphi)$ satisfying
that
\begin{equation*}
\frac{\omega_{u_\varphi}^n}{\omega_\varphi^n}=e^{f_\varphi}\frac{\int_{X}\omega_{u_\varphi}^n}{\int_{X}\omega_\varphi^n}.
\end{equation*}
Clearly, this is equivalent to \eqref{eq:102}. The claim is proved.
Therefore, in this way, we can find all solutions
\textup{(}which are infinitely many\textup{)} to equation \eqref{eq:FT0} in the
balanced class of a given balanced metric on $X$ of nonnegative
orthogonal bisectional curvature.

So the idea used in this paper, which is to transfer from the form-type
Calabi--Yau equation to a function-type equation, may be useful.
Later we will establish the theorem 1 on any compact K\"ahler
manifold.
\end{rema}

We employ the continuity method to prove Theorem~\ref{th:main}. In Section~\ref{se:C2}, we establish an \emph{a priori} $C^2$ estimate for the solution $u$. This is the place where we need the curvature condition. The $C^2$ estimate enables us to obtain a general \emph{a priori} $C^0$ estimate, via the classic Moser's iteration. 
This is the content of Section~\ref{se:C0}. We then adapt the
Evans--Krylov theory to our form--type equation, and obtain in
Section~\ref{se:Hld} the H\"older estimates for second derivatives.
The openness is covered by Theorem 3 in our previous
paper~\cite{FuWangWu}. For readers' convenience, we briefly indicate
the argument in the last section, Section~\ref{se:open}. The
uniqueness is also proved in Section~\ref{se:open}.

\begin{ack}
The authors would like to thank Professor S.-T.~Yau for helpful
discussion. Part of the work was done while the third named author
was visiting Fudan University, he would like to thank their  warm
hospitality. Fu is supported in part by NSFC grants 10831008 and
11025103.
\end{ack}

\section{$C^2$ estimates for Form--type equations} \label{se:C2}

In this section, we would like to establish the following estimate:
\begin{lemm} \label{le:C2}
Given $F \in C^2(X)$, let $u \in C^4(X)$ satisfy that 
\[
   \omega_0^{n-1} + \ppr u \wedge \eta^{n-2} > 0 \qquad \textup{on $X$},
\]
and that
\begin{eqnarray} \label{eq:Cf}
\det [\omega_0^{n-1}+\ppr u\wedge \eta^{n-2}]= e^{F} \det\omega_0^{n-1}.
\end{eqnarray}
Then, we have
\begin{equation} \label{eq:Laplu}
    \Delta_{\eta} u \le C + C( u - \inf_X u) \qquad \textup{on $X$},
\end{equation}
and
\[
   \sup_X |\omega_0^{n-1} + \partial \bar{\partial} u \wedge \eta^{n-2}|_{\eta} \le C + \Big(\sup_X u - \inf_X u \Big).
\]
Here $\Delta_{\eta} v =\sum \eta^{i\bar j} v_{i\bar j}$ denotes the Laplacian of a function $v$ with respect to $\eta$, and $C>0$ is a constant depending only
on $\inf_X (\Delta_{\eta}F)$, $\sup_X F$, $\eta$, $n$, and $\omega_0$.
\end{lemm}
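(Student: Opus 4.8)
The plan is to run a Yau-type second-order estimate adapted to the form-type operator. Write the equation \eqref{eq:Cf} in terms of the auxiliary metric $\omega_u$ determined by $\omega_u^{n-1}=\omega_0^{n-1}+\ppr u\wedge\eta^{n-2}$; then \eqref{eq:Cf} becomes $\det\omega_u^{n-1}=e^F\det\omega_0^{n-1}$, i.e., up to a dimensional constant, $\omega_u^n=e^{F/(n-1)}\omega_0^n$ (after absorbing constants into $F$). The quantity to differentiate is $Q=\log\operatorname{tr}_\eta\omega_u - A u$ for a large constant $A>0$ to be chosen, or equivalently $Q=\log(\operatorname{tr}_\eta\omega_u)-A(u-\inf_X u)$; since $X$ is compact, $Q$ attains its maximum at some point $x_0$. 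I would compute $\Delta_{\omega_u}Q$ at $x_0$ in a frame that is normal for $\eta$ and simultaneously diagonalizes $\omega_u$, and show it is $\ge 0$ there, extracting from $\Delta_{\omega_u}Q(x_0)\le 0$ the bound $(\operatorname{tr}_\eta\omega_u)(x_0)\le C+C(u(x_0)-\inf_X u)$; combined with the maximum property of $Q$ this gives \eqref{eq:Laplu} after noting $\operatorname{tr}_\eta\omega_u$ and $\Delta_\eta u$ differ by a bounded amount (since $\omega_u^{n-1}-\omega_0^{n-1}=\ppr u\wedge\eta^{n-2}$ controls the full trace of $\ppr u$ on a Kähler manifold). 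The second displayed bound follows from the first together with the elementary fact that each eigenvalue of $\omega_u$ relative to $\eta$ is controlled by the trace and by $\det\omega_u$ (which is fixed by the equation), so $|\omega_0^{n-1}+\partial\bar\partial u\wedge\eta^{n-2}|_\eta$ is controlled by $\operatorname{tr}_\eta\omega_u$ up to constants depending on $\inf F$, $\sup F$.

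The differentiation itself splits into three contributions. First, $\Delta_{\omega_u}\log\operatorname{tr}_\eta\omega_u$ produces (a) a term $-|\nabla\operatorname{tr}_\eta\omega_u|^2_{\omega_u}/(\operatorname{tr}_\eta\omega_u)^2$, (b) the Laplacian of the traced equation $\Delta_{\omega_u}(\operatorname{tr}_\eta\omega_u)/\operatorname{tr}_\eta\omega_u$, and (c) curvature terms coming from commuting covariant derivatives. Here one must differentiate the relation $\det\omega_u^{n-1}=e^F\det\omega_0^{n-1}$ twice; this is where the form-type structure enters, because the linearization of $u\mapsto\omega_u$ is not the identity but is mediated by the map sending an $(n-1,n-1)$-form to the associated $(1,1)$-form. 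The key algebraic identity is that $\partial_k(\omega_u^{n-1})=\partial_k(\omega_0^{n-1})+\ppr(\partial_k u)\wedge\eta^{n-2}$, so $\partial_k\omega_u$ has a precise expression in terms of $\partial_k\partial_i\bar\partial_j u$ contracted against $\omega_u$ and $\eta$; I would package this via the "co-factor" operator $P^{i\bar j}$ of the map $\omega_u\mapsto\omega_u^{n-1}$ so that the linearized operator is $L(v)=P^{i\bar j}v_{i\bar j}$ with $P$ positive definite (this is essentially the operator from \cite{FuWangWu}). The Cauchy–Schwarz / Yau-type trick then bounds the bad third-derivative terms by the good gradient term in (a) plus lower-order terms, provided we also keep the $-Au$ contribution, whose Hessian against $\omega_u$ gives $-A\operatorname{tr}_{\omega_u}\eta + A\operatorname{tr}_{\omega_u}\omega_u$-type terms; choosing $A$ large forces the net coefficient of $\operatorname{tr}_{\omega_u}\eta$ to be positive and dominant.

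The main obstacle, and the reason the curvature hypothesis \eqref{eq:nonbis} is invoked, is the curvature term (c): commuting derivatives in $\Delta_{\omega_u}\operatorname{tr}_\eta\omega_u$ produces a sum of the schematic form $\sum_{i,j}\bigl(\lambda_j/\lambda_i - \text{something}\bigr)R_{i\bar i j\bar j}$ together with $\sum R_{i\bar i j\bar j}\lambda_i^{-1}$-type terms, where $\lambda_i$ are the eigenvalues of $\omega_u$ relative to $\eta$; the genuinely dangerous terms are those with a negative sign and a factor that is not obviously controlled. Under nonnegativity of the orthogonal bisectional curvature, $R_{i\bar i j\bar j}\ge 0$ for $i\ne j$, and one arranges the computation so that these appear with a favorable sign (or are multiplied by a manifestly nonnegative combination of eigenvalues, using that $\sum_i 1/\lambda_i=\operatorname{tr}_{\omega_u}\eta$ and symmetry in $i\leftrightarrow j$ to symmetrize the coefficient $\lambda_j/\lambda_i+\lambda_i/\lambda_j-2\ge 0$), so that the curvature contribution is bounded below by a harmless multiple of $\operatorname{tr}_{\omega_u}\eta$ times a bounded quantity. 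I expect that handling these curvature terms cleanly — isolating exactly which eigenvalue-weighted combinations of $R_{i\bar i j\bar j}$ appear and checking their coefficients are nonnegative — is the technical heart of the proof; everything else is the standard maximum-principle bookkeeping, and the passage from the $C^2$ bound at $x_0$ to the global estimate \eqref{eq:Laplu} is immediate from the definition of $Q$.
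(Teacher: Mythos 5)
Your outline captures the broad strategy (maximum principle at a point where a test function is extremal, differentiate the equation twice, invoke the curvature hypothesis to kill the dangerous terms), but your specific choice of test function does not match the structure of the form-type equation and introduces a gap.

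The crucial observation that the paper exploits, and that you have missed, is that the trace $\mathrm{tr}_\eta\Psi_u$ of the $(n-1,n-1)$-form $\Psi_u = \omega_0^{n-1} + \ppr u \wedge \eta^{n-2}$ is \emph{linear} in $u$: in the paper's index convention it equals $h + \Delta_\eta u$ with $h = n\eta\wedge\omega_0^{n-1}/\eta^n$. Consequently the paper takes $\phi = (h + \Delta_\eta u) - Au$, with \emph{no logarithm}, and applies the genuine linearized operator $L\phi = (n-1)\sum \Psi_u^{k\bar l}(\ppr\phi\wedge\eta^{n-2})_{k\bar l}$; at a maximum of $\phi$ one has $L\phi\le 0$ for free, and the fourth-order terms in $L(\Delta_\eta u)$ coincide exactly with what the twice-differentiated equation supplies, so the usual Yau-type "good gradient term'' obtained from a logarithm is not needed (the nonnegative quantity $\sum|(\Psi_u)_{k\bar l,a}|^2/[(\Psi_u)_{l\bar l}(\Psi_u)_{k\bar k}]$ from the differentiated equation is simply discarded). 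Your proposal instead uses $Q=\log\mathrm{tr}_\eta\omega_u - Au$, where $\omega_u$ is determined implicitly by $\omega_u^{n-1}=\Psi_u$. That quantity is a complicated nonlinear function of the complex Hessian of $u$, and your stated operator $\Delta_{\omega_u}$ is not the linearization of the equation in $u$ (it is not the same as the operator $P^{i\bar j}\partial_i\partial_{\bar j}$ you later invoke). These two choices would make the computation substantially heavier and would require you to relate $\Delta_{\omega_u}$ to the correct linearized operator — a step you do not address.

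There is also a concrete false statement: you assert that $\mathrm{tr}_\eta\omega_u$ and $\Delta_\eta u$ "differ by a bounded amount'' because $\omega_u^{n-1}-\omega_0^{n-1}=\ppr u\wedge\eta^{n-2}$. What the trace identity actually gives is that $\mathrm{tr}_\eta(\omega_u^{n-1})$, not $\mathrm{tr}_\eta\omega_u$, equals $h+\Delta_\eta u$. Diagonalizing with eigenvalues $\lambda_i$ of $\omega_u$ relative to $\eta$, one has $\mathrm{tr}_\eta\omega_u=\sum_i\lambda_i$ while $\mathrm{tr}_\eta(\omega_u^{n-1})=\sum_i\prod_{j\ne i}\lambda_j$, which are genuinely different quantities. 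A bound on one implies a bound on the other only after using the determinant equation (which pins down $\prod_i\lambda_i$), and the resulting conversion is polynomial, not additive; combined with the logarithm in your $Q$, this would give an exponential rather than the linear estimate $\Delta_\eta u \le C + C(u-\inf_X u)$ that the lemma asserts and that the $C^0$ argument in Section~3 requires. Finally, your schematic account of the curvature term is in the right spirit (symmetrize in the two indices and use $R_{i\bar i j\bar j}\ge 0$ for $i\ne j$), but the actual symmetrized quantity in the paper is in terms of $(\Psi_u)_{i\bar i}$, i.e., $R_{l\bar l a\bar a}\,[(\Psi_u)_{a\bar a}-(\Psi_u)_{l\bar l}]^2/[(\Psi_u)_{l\bar l}(\Psi_u)_{a\bar a}]$, not $\lambda_i$ and $\lambda_j$ themselves; getting the weights right is exactly why the linear test function $h+\Delta_\eta u$ is the natural one here.
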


Here are some conventions:
      For an $(n-1,n-1)$--form $\Theta$, we denote
\begin{equation*} \label{eq:index}
  \begin{split}
        \Theta & = \Big(\frac{\sqrt{-1}}{2} \Big)^{n-1} (n-1)! \\
            & \quad \cdot \sum_{p,q}s(p,q)\Theta_{p\bar{q}}dz^1\wedge d\bar{z}^1 \cdots
            \wedge\widehat{dz^p} \wedge d \bar{z}^p \wedge\cdots \wedge
            d\bar{z}^q \wedge \widehat{d\bar{z}^q}\wedge\cdots \wedge d z^n \wedge
            d\bar{z}^n,
    \end{split}
\end{equation*}
in which 
\begin{equation} \label{eq:sign}
   s(p,q) =
    \begin{cases}
      - 1,  & \textup{if $p > q$}; \\
      1, & \textup{if $p \le q$}.
    \end{cases}
 \end{equation}
 Here we introduce the sign function $s$ so that, 
 \[
   \begin{split}
    & d z^p \wedge d\bar{z}^q \wedge s(p,q) dz^1\wedge d\bar{z}^1 \cdots
            \wedge\widehat{dz^p} \wedge d \bar{z}^p \wedge\cdots \wedge
            d\bar{z}^q \wedge \widehat{d\bar{z}^q}\wedge\cdots \wedge d z^n \wedge
            d\bar{z}^n \\
        & = dz^1 \wedge d\bar{z}^1 \wedge \cdots \wedge d z^n \wedge d\bar{z}^n, \qquad \textup{for all $1 \le p, q \le n$}.
    \end{split}
 \]

We denote
\[
   \det \Theta = \det (\Theta_{p\bar{q}}).
\]
 If the matrix $(\Theta_{p\bar{q}})$ is invertible, we denote by $(\Theta^{p\bar{q}})$ the transposed inverse of $(\Theta_{p\bar{q}})$, i.e.,
\[
    \sum_l \Theta_{i\bar{l}} \Theta^{j\bar{l}} = \delta_{ij}.
\]
Note that, for a positive $(1,1)$--form $\omega$ given by
\[
   \omega = \fr \sum_{i,j=1}^n g_{i\bar{j}} dz_i \wedge d\bar{z}_j,
\]
we have
\[
   \omega^n = \Big(\frac{\sqrt{-1}}{2} \Big)^{n} n! \det (g_{i\bar{j}}) dz^1\wedge d\bar{z}_1 \wedge\cdots \wedge dz^n \wedge d\bar{z}^n,
\]
and by our convention,
\[
   (\omega^{n-1})_{i\bar{j}} = \det(g_{i\bar{j}}) g^{i\bar{j}}.
\]
It follows that
\begin{equation} \label{eq:n-1}
   \det (\omega^{n-1}) = \det (g_{i\bar{j}})^{n-1},
\end{equation}
and
\[
   (\omega^{n-1})^{i\bar{j}} = \frac{g_{i\bar{j}}}{\det (g_{i\bar{j}})}.
\]

In the following, the subscripts such as ``$,p$'' stand for the ordinary local derivatives; for example,
\begin{equation} \label{eq:nt}
    \eta_{i\bar{j},k} = \frac{\partial \eta_{i\bar{j}}}{\partial z^k}, \quad \eta_{i\bar{j},l\bar{m}} = \frac{\partial^2 \eta_{i\bar{j}}}{\partial z^l \partial \bar{z}^m}.
\end{equation}
For a function $h$ we can omit the comma: $h_l = h_{,l}$, $h_{l\bar{m}} = h_{,l\bar{m}}$, etc.
Unless otherwise indicated, all the summations below range from $1$ to $n$. We remark that, under the convention, equation~\eqref{eq:FT} can be rewritten as
\[
   \frac{\det[\omega_0^{n-1} + \ppr u \wedge \eta^{n-2}]}{\det \omega_0^{n-1}} = e^{(n-1)f} \left(\frac{\int_X \omega_u^n }{\int_X \omega_0^n} \right)^{n-1},
\]
which is convenient for deriving the estimates.

\begin{pflemC2}
Let
\begin{equation*}
    \Psi_u=\Psi+\ppr u\wedge\eta^{n-2}, \ \ \ \text{where}\ \ \Psi=\omega_0^{n-1}.
\end{equation*}
Let
\begin{align*}
  \phi
  & = \frac{\sum_{i,j} \eta_{i\bar{j}} (\Psi_u)_{i\bar{j}}}{\det\eta} - A u,
\end{align*}
where $A >0$ is a large constant to be determined. Using wedge products, the function $\phi$ can also be written as
\begin{equation} \label{eq:Lphi}
  \begin{split}
  \phi
    & = \frac{n \eta \wedge \Psi_u}{\eta^n} - Au \\
    & = (h + \Delta_{\eta} u) - A u, \quad \textup{where $h = \frac{n \eta \wedge \omega_0^{n-1}}{\eta^n}$}.
  \end{split}
\end{equation}
Consider
the operator
\begin{equation*}
L\phi=(n-1)\sum_{k,l} \Psi_{u}^{k\bar{l}}\big( \frac{\sqrt{-1}}{2}\partial \bar{\partial}\phi\wedge\eta^{n-2} \big)_{k\bar{l}}.
\end{equation*}

Suppose that $\phi$ attains its maximum at some point $P$ in $X$. We choose a normal coordinates, such that at $P$,
$\eta_{i\bar{j}}=\delta_{ij}$ and $d\eta_{i\bar{j}}=0$. Then,
we rotate the axes so that at $P$
 we have $(\Psi_u)_{p\bar{q}}=\delta_{pq}(\Psi_u)_{p\bar{p}}$.
 Thus, for any smooth function $v$ on $X$, we have at $P$ that
 \begin{equation} \label{eq:cof}
    (n-1)\big(\frac{\sqrt{-1}}{2}\partial \bar{\partial} v \wedge \eta^{n-2}\big)_{i\bar{j}} = \delta_{ij}\sum_{p \ne i} v_{p\bar{p}} + (1 - \delta_{ij})v_{j\bar{i}}.
 \end{equation}
 By \eqref{eq:cof} we obtain that
 \begin{align}
    (\Psi_u)_{i\bar{i}} & = \Psi_{i\bar{i}} + \frac{1}{n-1} \sum_{q \ne i} u_{q\bar{q}}, \label{eq:diagPsiu} \\
    (\Psi_u)_{i\bar{j}} & = \Psi_{i\bar{j}} + \frac{u_{j\bar{i}}}{n - 1}  = 0,  \qquad \textup{for all $i \ne j$}. \label{eq:odPsiu}
 \end{align}
 It follows that
 \begin{equation} \label{eq:trace}
    \sum_{i=1}^n (\Psi_u)_{i\bar{i}}
    =  \sum_{i=1}^n \Psi_{i\bar{i}} +  \sum_{i=1}^n u_{i\bar{i}} = h + \Delta_{\eta} u.
 \end{equation}
 Furthermore, we have
 \begin{equation} \label{eq:nmdPsi}
    (\Psi_u)_{i\bar{j},p} = \Psi_{i\bar{j},p} + \frac{\delta_{ij}}{n-1} \sum_{q \ne i} u_{q\bar{q}p} + \frac{1 - \delta_{ij}}{n - 1} u_{j\bar{i}p},
 \end{equation}
 and
 \begin{equation*} 
   \begin{split}
    (\Psi_u)_{i\bar{i},p\bar{p}}
    &  = \Psi_{i\bar{i},p\bar{p}} + \frac{1}{n-1}\sum_{k \ne i} u_{k\bar{k}p\bar{p}}  + \frac{1}{n - 1}\sum_{k \ne i} u_{k\bar{k}}\Big(\sum_{j \ne k, j \ne i} \eta_{j\bar{j},p\bar{p}} \Big)\\
    &\quad  - \frac{1}{n - 1}\sum_{a \ne i, b \ne i, a \ne b} u_{a\bar{b}} \eta_{b\bar{a},p\bar{p}}.
   \end{split}
 \end{equation*}
 Note that under the normal coordinate system, the curvature $(R_{i\bar{j}k\bar{l}})$ of $\eta$ reads
 \[
    R_{i\bar{j}k\bar{l}} = - \eta_{i\bar{j},k\bar{l}} + \sum_{a,b} \eta^{a\bar{b}} \eta_{i\bar{b},k} \eta_{a\bar{j},\bar{l}} = - \eta_{i\bar{j},k\bar{l}}, \quad \textup{at $P$}.
 \]
 This together with \eqref{eq:odPsiu} imply that
 \begin{equation} \label{eq:nmd2Psi}
   \begin{split}
    (\Psi_u)_{i\bar{i},p\bar{p}}
    &  = \Psi_{i\bar{i},p\bar{p}} + \frac{1}{n-1}\sum_{k \ne i} u_{k\bar{k}p\bar{p}}  - \frac{1}{n - 1}\sum_{k \ne i} u_{k\bar{k}}\Big(\sum_{j \ne k, j \ne i} R_{j\bar{j}p\bar{p}} \Big)\\
    &\quad  - \sum_{a \ne i, b \ne i, a \ne b} \Psi_{a\bar{b}} R_{a\bar{b}p\bar{p}}.
   \end{split}
 \end{equation}

 We compute at $P$ that
 \begin{align*}
    L\phi = & (n-1)\sum_l (\Psi_u)^{l\bar{l}} \big(\frac{\sqrt{-1}}{2}\partial \bar{\partial} \phi \wedge \eta^{n-2}\big)_{l\bar{l}}
       = \sum_l \sum_{p \ne l} (\Psi_u)^{l\bar{l}} \phi_{p\bar{p}}.
 \end{align*}
 Note that
 \begin{equation} \label{eq:1stphi}
     0 = \phi_p (P) = h_p + (\Delta_{\eta} u)_p - A u_p.
 \end{equation}
 Apply \eqref{eq:1stphi} to obtain that
 \begin{align*}
    0 \ge \phi_{p\bar{p}} (P) = h_{p\bar{p}} + (\Delta_{\eta} u)_{p\bar{p}} - A
    u_{p\bar{p}}.
 \end{align*}
 It follows that
 \begin{equation}\lab{3333}
 \begin{split}
   0
   & \ge L \phi = \sum_l \sum_{p \ne l} (\Psi_u)^{l\bar{l}} \phi_{p\bar{p}} \\
   & = \sum_l \sum_{p \ne l} (\Psi_u)^{l\bar{l}}[h_{p\bar{p}} + (\Delta_{\eta} u)_{p\bar{p}}]
   - A \sum_l \sum_{p \ne l} (\Psi_u)^{l\bar{l}} u_{p\bar{p}}.
 \end{split}
 \end{equation}
Notice that
   \begin{align}
    & \sum_l \sum_{p \ne l} (\Psi_u)^{l\bar{l}}[h_{p\bar{p}} + (\Delta_{\eta} u)_{p\bar{p}}] \notag \\
    & = \sum_l \sum_{p \ne l} (\Psi_u)^{l\bar{l}} h_{p\bar{p}} + \sum_{l,a} \sum_{p \ne l} (\Psi_u)^{l\bar{l}} u_{a\bar{a}p\bar{p}}
    +  \sum_l \sum_{p \ne l} (\Psi_u)^{l\bar{l}} \sum_{a,b} \eta^{a\bar{b}}_{,p\bar{p}} u_{a\bar{b}} \notag \\
    & = \sum_l \sum_{p \ne l} (\Psi_u)^{l\bar{l}} h_{p\bar{p}} + \sum_{l,a} \sum_{p \ne l} (\Psi_u)^{l\bar{l}} u_{a\bar{a}p\bar{p}}
    +  \sum_{l,a} \sum_{p \ne l} (\Psi_u)^{l\bar{l}} R_{a\bar{a}p\bar{p}} u_{a\bar{a}} \label{eq:4thtexp} \\
    & \quad - (n-1) \sum_l \sum_{p \ne l}\sum_{a \ne b} (\Psi_u)^{l\bar{l}}R_{b\bar{a}p\bar{p}} \Psi_{b\bar{a}}, \qquad \textup{\Big(by \eqref{eq:odPsiu}\Big).} \notag
 \end{align}

 Here the fourth derivative term can be handled by the equation \eqref{eq:Cf}:
 We rewrite \eqref{eq:Cf} as
\[
        \log \det\Psi_u = F + \log \det\Psi.
\]
Differentiating this in the direction of $\p/\p z^a$ yields
\[
    \sum_{k,l} (\Psi_u)^{k\bar{l}}(\Psi_u)_{k\bar{l},a} = (F + \log\det\Psi)_{a}.
\]
and then,
\begin{eqnarray*}
 \sum_{k,l}(\Psi_u)^{k\bar{l}}(\Psi_u)_{k\bar{l},a\bar{b}}=(F + \log\det\Psi)_{a\bar{b}}
+\sum_{k,l,p,q}(\Psi_u)^{k\bar{q}}(\Psi_u)^{p\bar{l}}(\Psi_u)_{k\bar{l},a} (\Psi_u)_{p\bar{q},\bar{b}}.
\end{eqnarray*}
Contracting this with $(\eta^{a\bar{b}})$ and applying the normal coordinates yield that
\[
   \sum_{l,a}(\Psi_u)^{l\bar{l}}(\Psi_u)_{l\bar{l},a\bar{a}}=\sum_a (F + \log\det\Psi)_{a\bar{a}}
+\sum_{k,l,a}\frac{\big|(\Psi_u)_{k\bar{l},a}\big|^2}{(\Psi_u)_{l\bar{l}}(\Psi_u)_{k\bar{k}}}.
\]
This together with \eqref{eq:nmd2Psi} imply that
\begin{align*}
   & \sum_{l,a} (\Psi_u)^{l\bar{l}}\Psi_{l\bar{l},a\bar{a}} + \frac{1}{n-1}\sum_{l,a} \sum_{p \ne l} (\Psi_u)^{l\bar{l}} u_{p\bar{p}a\bar{a}} \\
   & \ge \sum_{k,l,a}\frac{\big|(\Psi_u)_{k\bar{l},a}\big|^2}{(\Psi_u)_{l\bar{l}}(\Psi_u)_{k\bar{k}}} + \frac{1}{n-1} \sum_{l,a} (\Psi_u)^{l\bar{l}}\sum_{p \ne l} u_{p\bar{p}}\Big(\sum_{m \ne p, m \ne l}R_{m\bar{m}a\bar{a}} \Big) \\
   & + \sum_{l,a} (\Psi_u)^{l\bar{l}} \Big(\sum_{p \ne l, q \ne l, p \ne q} \Psi_{p\bar{q}} R_{p\bar{q}a\bar{a}} \Big)
   + \Delta_{\eta} F + \Delta_\eta(\log \det \Psi).
\end{align*}
Combining this with \eqref{eq:4thtexp} yield
\begin{equation} \label{eq:curPsiu}
 \begin{split}
   & \sum_l \sum_{p \ne l} (\Psi_u)^{l\bar{l}}(h_{p\bar{p}} + (\Delta_{\eta} u)_{p\bar{p}})  \\
   & \ge \sum_{l,a} \sum_{p \ne l} (\Psi_u)^{l\bar{l}}R_{a\bar{a}p\bar{p}} u_{a\bar{a}} + \sum_{l,a} \sum_{p \ne l} (\Psi_u)^{l\bar{l}}u_{p\bar{p}}\Big(\sum_{m \ne p, m \ne l} R_{a\bar{a}m\bar{m}} \Big) \\
   & \quad + (n-1)\sum_{k,l,a}\frac{\big|(\Psi_u)_{k\bar{l},a}\big|^2}{(\Psi_u)_{l\bar{l}}(\Psi_u)_{k\bar{k}}} +(n-1) \Delta_{\eta} F + (n-1)\Delta_{\eta}(\log \det \Psi) \\
   & \quad + \sum_l \sum_{p \ne l} (\Psi_u)^{l\bar{l}} h_{p\bar{p}} - (n - 1) \sum_{l, a} (\Psi_u)^{l\bar{l}} \Psi_{l\bar{l},a\bar{a}}   \\
   & \quad + (n-1)\sum_{l,a}(\Psi_u)^{l\bar{l}} \Big(\sum_{p \ne l, q \ne l, p \ne q}\Psi_{p\bar{q}} R_{p\bar{q}a\bar{a}} \Big)\\
   & \quad - (n-1)\sum_l \sum_{p \ne l} \sum_{a \ne b} (\Psi_u)^{l\bar{l}} R_{a\bar{b}p\bar{p}} \Psi_{a\bar{b}}.
 \end{split}
\end{equation}

The first two terms on the right hand side of above inequality can be  handled as follows.
  \begin{equation}\lab{1111}
  \begin{split}
     & \sum_{l,a} \sum_{p \ne l} (\Psi_u)^{l\bar{l}}R_{a\bar{a}p\bar{p}} u_{a\bar{a}} + \sum_{l,a} \sum_{p \ne l} (\Psi_u)^{l\bar{l}}u_{p\bar{p}}\Big(\sum_{m \ne p, m \ne l} R_{a\bar{a}m\bar{m}} \Big) \\
     & = \sum_{l,a} (\Psi_u)^{l\bar{l}} R_{l\bar{l}a\bar{a}} u_{l\bar{l}}  - \sum_{l,a} (\Psi_u)^{l\bar{l}} R_{a\bar{a}l\bar{l}} u_{a\bar{a}} + \sum_{l,p} \sum_{a \ne l} (\Psi_u)^{l\bar{l}} u_{a\bar{a}} R_{a\bar{a}p\bar{p}} \\
     & \quad + \sum_{l,a} \sum_{p \ne l} (\Psi_u)^{l\bar{l}}u_{p\bar{p}}\Big(\sum_{m \ne p, m \ne l} R_{a\bar{a}m\bar{m}} \Big) \\
     & = \frac{1}{2} \sum_{l,a} (\Psi_u)^{l\bar{l}} R_{l\bar{l}a\bar{a}} (u_{l\bar{l}} - u_{a\bar{a}}) + \frac{1}{2} \sum_{l,a} (\Psi_u)^{a\bar{a}} R_{l\bar{l}a\bar{a}} (u_{a\bar{a}} - u_{l\bar{l}}) \\
     & \quad + (n-1)\sum_l \big(\sum_{m \ne l} R_{m\bar{m}}\big) (\Psi_u)^{l\bar{l}}  \big[(\Psi_u)_{l\bar{l}} - \Psi_{l\bar{l}}\big] \qquad \textup{\Big(by \eqref{eq:diagPsiu}\Big)} \\
     & = \frac{1}{2}\sum_{l,a} R_{l\bar{l}a\bar{a}} \frac{(u_{l\bar{l}} - u_{a\bar{a}})[(\Psi_u)_{a\bar{a}} - (\Psi_u)_{l\bar{l}}]}{(\Psi_u)_{l\bar{l}} (\Psi_u)_{a\bar{a}}} \\
     & \quad + (n - 1)^2 \sum_l R_{l\bar{l}}
     - (n -1) \sum_l (\Psi_u)^{l\bar{l}} \Psi_{l\bar{l}} \big(\sum_{m \ne l} R_{m\bar{m}} \big).
     \end{split}
  \end{equation}
Apply \eqref{eq:diagPsiu} to estimate the first term of last equality
   \begin{equation}\lab{2222}
   \begin{split}
     & \frac{1}{2}\sum_{l,a} R_{l\bar{l}a\bar{a}}\frac{(u_{l\bar{l}} - u_{a\bar{a}})[(\Psi_u)_{a\bar{a}} - (\Psi_u)_{l\bar{l}}]}{(\Psi_u)_{l\bar{l}} (\Psi_u)_{a\bar{a}}} \\
     & = \frac{n - 1}{2}\sum_{l,a} R_{l\bar{l}a\bar{a}}\frac{[(\Psi_u)_{a\bar{a}} - (\Psi_u)_{l\bar{l}}]^2}{(\Psi_u)_{l\bar{l}} (\Psi_u)_{a\bar{a}}} \\
    & \quad + \frac{n - 1}{2} \sum_{l,a}R_{l\bar{l}a\bar{a}}\frac{(\Psi_{l\bar{l}} - \Psi_{a\bar{a}})[(\Psi_u)_{a\bar{a}} - (\Psi_u)_{l\bar{l}}]}{(\Psi_u)_{l\bar{l}} (\Psi_u)_{a\bar{a}}} \\
    & \ge (n - 1) \sum_{l,a}R_{l\bar{l}a\bar{a}}\frac{\Psi_{l\bar{l}} - \Psi_{a\bar{a}}}{(\Psi_u)_{l\bar{l}}}, \qquad \textup{by \eqref{eq:nonbis}.}
   \end{split}
   \end{equation}
Combining (\ref{eq:curPsiu}) with (\ref{1111}) and then with (\ref{2222}), we obtain
\begin{equation} \label{eq:Lphi1}
  \begin{split}
   & \sum_l \sum_{p \ne l} (\Psi_u)^{l\bar{l}}(h_{p\bar{p}} + (\Delta_{\eta} u)_{p\bar{p}})  \\
   & \ge - C_1(n-1) \sum_l (\Psi_u)^{l\bar{l}} - (n-1)^2 C_1 + (n-1)\inf \Delta_{\eta} F .
  \end{split}
\end{equation}
Here and throughout this section, we denote by $C_1>0$ a generic constant depending only on $\Psi$ and the curvature of $\eta$.

Substituting \eqref{eq:Lphi1} into \eqref{3333} yields
\begin{align*}
  0
  \ge L \phi
  & \ge  - A \sum_l \sum_{p \ne l} (\Psi_u)^{l\bar{l}} u_{p\bar{p}} - C_1(n - 1) \sum_l (\Psi_u)^{l\bar{l}} \\
  & \quad - (n-1)^2 C_1 - \inf \Delta_{\eta} F \\
  & = - n (n-1) A + (n-1) A \sum_l (\Psi_u)^{l\bar{l}} \Psi_{l\bar{l}} - C_1 (n-1) \sum_l (\Psi_u)^{l\bar{l}} \\
  & \quad  - (n-1)^2 C_1 + (n-1) \inf \Delta_{\eta} F.
\end{align*}
Now we choose $A>0$ sufficiently large so that
\[
   A \inf_X (\min_l \Psi_{l\bar{l}}) \ge 2 C_1.
\]
It follows that
\begin{align*}
& \frac{nA}{C_1}+(n - 1) - \frac{\inf \Delta_{\eta} F}{C_1}
 \ge  \sum_{l=1}^n (\Psi_u)^{l\bar{l}}  \\
 &\ge  \left[\sum_{i=1}^n(\Psi_u)_{i\bar{i}}\right]^{\frac{1}{n-1}}\left\{\det                \big[(\Psi_u)_{i\bar{j}}\big]\right\}^{\frac{-1}{n-1}}  \\
 &  =  \left[\sum_{i=1}^n (\Psi_u)_{i\bar{i}}\right]^{\frac{1}{n-1}}e^{\frac{- F}{n-1}} (\det \Psi)^{\frac{-1}{n-1}}.
\end{align*}
Hence,
\[
   h + \Delta_{\eta} u = \sum_{i=1}^n (\Psi_u)_{i\bar{i}} \le C_2 \qquad \textup{at $P$}.
\]
Here and throughout this section, we denote by $C_2$ a generic positive constant depending only on $n$, $\Psi$, $\eta$, $\sup \Delta_{\eta} F$, and $\sup F$. Therefore, at any point in $X$,
\[
   (h + \Delta_{\eta} u) \le (h + \Delta_{\eta} u)(P) + A u - A u(P) \le C_2 + C_2 \big( u - \inf_X
   u\big).
\]
Since $[(\Psi_u)_{i\bar{j}}]$ is positive definite everywhere, we have
\[
   |(\Psi_u)_{i\bar{j}}| \le C_2 + C_2 (u - \inf_X u), \qquad \textup{for all $1 \le i, j \le n$}.
\]
This completes the proof.
\qed
\end{pflemC2}
Lemma~\ref{le:C2} enables us to establish the $C^2$ estimate for equation~\eqref{eq:FT}:
\begin{coro} \label{co:C2}
  For any $f \in C^{\infty}(X)$, let $u \in C^{\infty}(X)$ be a solution of
  \begin{equation} \label{eq:openeqn}
     \frac{\det (\omega_u^{n-1})}{\det (\omega_0^{n-1})} = e^{(n-1)f} \left( \frac{\int_X \omega_u^n}{\int_X e^f \omega_0^n} \right)^{n-1} ,
  \end{equation}
  where $\omega_u$ is a positive $(1,1)$--form on $X$ such that
  \[
     \omega_u^{n-1} = \omega_0^{n-1} + \ppr u\wedge \eta^{n-2} > 0.
  \]
  Then, we have
  \begin{equation} \label{eq:Lap}
     \Delta_{\eta} u \le C + C (u - \inf_X u) \qquad \textup{on $X$},
  \end{equation}
  and
  \[
      \sup_X |\omega_u^{n-1}|_{\eta} \le C + C ( \sup_X u - \inf_X u),
  \]
  where $C> 0$ is a constant depending only on $f$, $\eta$, $n$, and $\omega_0$.
\end{coro}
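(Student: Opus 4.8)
The plan is to deduce Corollary~\ref{co:C2} from Lemma~\ref{le:C2} by verifying that any solution of \eqref{eq:openeqn} satisfies the hypothesis of the lemma with a function $F$ whose relevant norms are controlled by $f$, $\eta$, $n$, and $\omega_0$ alone. First I would take the logarithm of \eqref{eq:openeqn} to bring it into the form \eqref{eq:Cf}: writing $\det(\omega_u^{n-1}) = \det[\omega_0^{n-1} + \ppr u \wedge \eta^{n-2}]$ and $\det(\omega_0^{n-1}) = \det\omega_0^{n-1}$, we see that \eqref{eq:openeqn} is exactly \eqref{eq:Cf} with
\[
   F = (n-1) f + (n-1) \log\!\left( \frac{\int_X \omega_u^n}{\int_X e^f \omega_0^n} \right).
\]
In particular $F$ differs from $(n-1)f$ by an \emph{additive constant}, so $\Delta_\eta F = (n-1)\Delta_\eta f$ pointwise, and hence $\inf_X(\Delta_\eta F)$ depends only on $f$ and $\eta$, as required by the dependence list in Lemma~\ref{le:C2}. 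The positivity hypothesis $\omega_0^{n-1} + \ppr u \wedge \eta^{n-2} > 0$ is built into the statement of the corollary, and $u \in C^\infty(X)$ is certainly $C^4$, so all structural assumptions of the lemma hold.

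The one point that needs genuine attention — and the main (mild) obstacle — is that the constant $C$ in Lemma~\ref{le:C2} is allowed to depend on $\sup_X F$, whereas we want a constant depending only on $f$, $\eta$, $n$, $\omega_0$, not on the a priori unknown ratio $\int_X \omega_u^n / \int_X e^f\omega_0^n$. To handle this I would show this ratio is bounded above and below by constants depending only on the listed data. The lower bound is automatic from the compatibility condition \eqref{eq:cond}, since $\int_X e^f\omega_0^n = \int_X \omega_0^n$, so in fact the denominator equals $\int_X\omega_0^n$, a fixed positive number. For the numerator, one integrates \eqref{eq:Cf}: $\int_X \det[\omega_0^{n-1} + \ppr u\wedge\eta^{n-2}]\,(\textup{appropriate volume}) $ relates $\int_X \omega_u^n$ to $\int_X e^F \omega_0^n$; alternatively, and more cleanly, one observes that $\int_X \omega_u^{n-1}\wedge\eta = \int_X \omega_0^{n-1}\wedge\eta$ because $\ppr u \wedge \eta^{n-1}$ is exact and $\eta$ is closed, which pins down a fixed positive quantity, and then one converts this into a two-sided bound on $\int_X\omega_u^n$ using the arithmetic–geometric mean inequality together with \eqref{eq:openeqn} itself (the pointwise identity $\omega_u^n/\omega_0^n = e^f\cdot(\text{ratio})$ forces the ratio to lie between $(\sup_X e^{-f})^{-1}\cdot(\inf\text{something})$ and the corresponding sup). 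Either way, $\sup_X F \le (n-1)\sup_X f + C(f,\eta,n,\omega_0)$.

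With these bounds in hand the deduction is immediate: apply Lemma~\ref{le:C2} to $u$ with the above $F$; its conclusion \eqref{eq:Laplu} reads $\Delta_\eta u \le C + C(u - \inf_X u)$ with $C$ depending only on $\inf_X(\Delta_\eta F)$, $\sup_X F$, $\eta$, $n$, $\omega_0$, and by the previous paragraph each of these is controlled by $f$, $\eta$, $n$, $\omega_0$. This gives \eqref{eq:Lap}. Similarly, the second conclusion of the lemma, $\sup_X|\omega_0^{n-1} + \partial\bar\partial u\wedge\eta^{n-2}|_\eta \le C + (\sup_X u - \inf_X u)$, is precisely $\sup_X|\omega_u^{n-1}|_\eta \le C + C(\sup_X u - \inf_X u)$ after absorbing constants, since $\omega_u^{n-1} = \omega_0^{n-1} + \ppr u\wedge\eta^{n-2}$ and the factor $\sqrt{-1}/2$ only changes $C$. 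This completes the proof; the work is entirely in the bookkeeping of the constants, there is no new analytic content beyond Lemma~\ref{le:C2}.
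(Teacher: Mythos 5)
Your high-level plan matches the paper: take $F=(n-1)f+(n-1)\log c$ with $c=\int_X\omega_u^n/\int_X e^f\omega_0^n$, observe that $\Delta_\eta F=(n-1)\Delta_\eta f$ since the second summand is a constant, and reduce everything to a bound on $\log c$. But both routes you sketch for bounding $c$ fail to close. The first (``integrate \eqref{eq:Cf}'') is circular: by \eqref{eq:n-1}, equation \eqref{eq:Cf} with this $F$ is equivalent to the pointwise identity $\omega_u^n=c\,e^f\omega_0^n$, and integrating that simply reproduces the definition of $c$, giving no information. The second uses the correct cohomological fact $\int_X\omega_u^{n-1}\wedge\eta=\int_X\omega_0^{n-1}\wedge\eta$ together with AM--GM, $\omega_u^{n-1}\wedge\eta\ge C_n\,(\omega_u^n)^{(n-1)/n}(\eta^n)^{1/n}$; substituting $\omega_u^n=c\,e^f\omega_0^n$ this yields an \emph{upper} bound on $c$, but AM--GM is one-sided and gives no lower bound. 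Your parenthetical claim that the pointwise identity ``forces the ratio to lie between'' certain values is not right: a pointwise identity containing an unknown constant constrains that constant not at all. What does constrain $c$ is the positivity of $\omega_u^{n-1}=\omega_0^{n-1}+\ppr u\wedge\eta^{n-2}$ combined with the sign of the Hessian of $u$ at its extrema.

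That is precisely the paper's (one-line) argument. At a maximum point $P$ of $u$, working in $\eta$-normal coordinates diagonalizing $(u_{i\bar j})(P)\le 0$, formula \eqref{eq:cof} shows the $(n-1,n-1)$-form matrix $(\ppr u\wedge\eta^{n-2})(P)$ has nonpositive diagonal entries, so $\det\omega_u^{n-1}(P)\le\det\omega_0^{n-1}(P)$; plugged into \eqref{eq:openeqn} this gives $e^{(n-1)f(P)}c^{\,n-1}\le 1$, hence $\log c\le -f(P)\le -\inf_X f$. The same computation at a minimum of $u$ gives $\log c\ge -\sup_X f$, so $\sup_X|F|\le(n-1)(\sup_X f-\inf_X f)$. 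With this step repaired, the remainder of your deduction from Lemma~\ref{le:C2} is correct and essentially identical to the paper's.
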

\begin{proof}
  Let
  \[
     F = (n-1) \left( f + \log \int_X \omega_u^n - \log \int_X e^f \omega_0^n \right).
  \]
  To apply Lemma~\ref{le:C2}, it suffices to estimate $\inf (\Delta_{\eta} F)$ and $\sup F$. Note that
  \[
     \Delta_{\eta} F = (n-1) \Delta_{\eta} f.
  \]
  Applying the maximum principle to \eqref{eq:openeqn} at the points where $u$ attain its maximum and minimum, respectively,  yields a uniform bound for the constant:
  \[
     - \sup f \le \log \int_X \omega_u^n - \log \int_X e^f \omega_0^n  \le - \inf f.
  \]
  This implies that $\sup |F| \le (n-1) (\sup f - \inf f)$.
\end{proof}

\section{$C^0$ estimates} \label{se:C0}

In this section, we first would like to derive the following general $C^0$ estimate. This then combining Corollary~\ref{co:C2} will settle the $C^0$ estimate for manifolds of nonnegative orthogonal bisectional curvature.
\begin{lemm} \label{le:C0}
  Let $(X,\eta)$ be an arbitrary K\"ahler manifold with complex dimension $n \ge 2$. Suppose that $u \in C^2(X)$ satisfies
  \begin{align*}
     \Delta u & \le C_1 + C_1 (u - \inf_X u), \\
     \Delta u & > - C_2,
  \end{align*}
  where $\Delta$ stands for the Laplacian with respect to $\eta$, and $C_1, C_2$ are two positive constants.
  Then,
  \[
    \sup_X u - \inf_X u \le C,
  \]
  in which $C>0$ is a constant depending only on $\eta$, $n$, $C_1$,  and $C_2$.
\end{lemm}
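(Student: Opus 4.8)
The plan is to reduce everything to two ingredients: a global potential‑theoretic bound coming from the lower Laplacian bound, and a localized Moser iteration near the minimum point. Normalize by setting $v:=u-\inf_X u\ge 0$, so that $\inf_X v=0$, $\Delta v\le C_1+C_1v$, and $\Delta v>-C_2$ on $X$ (here $\Delta=\Delta_\eta$); it suffices to bound $\sup_X v$. Throughout, $C$ denotes a constant depending only on $\eta$, $n$, $C_1$, $C_2$, allowed to change from line to line. First I would use the Green's function $G$ of $(X,\eta)$, normalized so that $\int_X G(x,\cdot)\,dV_\eta=0$ for every $x$; recall that $G$ is bounded below, $G\ge -A$ for some $A=A(\eta,n)$, and that $v(x)=\fint_X v\,dV-\int_X G(x,y)\Delta v(y)\,dV(y)$. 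Writing $\Delta v=(\Delta v+C_2)-C_2$ with $\Delta v+C_2\ge 0$, and using $-G\le A$ together with $\int_X(\Delta v+C_2)\,dV=C_2\,\mathrm{Vol}(X)$, one obtains at once
\[
   \sup_X v\ \le\ \fint_X v\,dV\ +\ A\,C_2\,\mathrm{Vol}(X).
\]
Thus it remains to bound $\fint_X v\,dV=\mathrm{Vol}(X)^{-1}\|v\|_{L^1(X)}$.

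For this I would run the classical Moser iteration, localized near a point $x_1\in X$ with $v(x_1)=\inf_X v=0$. Fix a radius $r_0>0$ smaller than, say, a quarter of the convexity radius of $(X,\eta)$, so that the geodesic balls $B(x_1,jr_0)$, $j\le 4$, have uniformly controlled geometry (uniform Sobolev and Poincaré constants, depending only on $\eta$). Since $v\ge 0$ and $\Delta v\ge -C_2$, the function $v$ is a nonnegative supersolution with bounded right‑hand side on $B(x_1,4r_0)$; multiplying $\Delta v\ge -C_2$ by $\zeta^2 v^{q-1}$ for a standard cutoff $\zeta$ and $q\ge 1$, integrating by parts, and iterating against the Sobolev inequality of $(X,\eta)$ — and noting that $\dim_{\mathbb R}X=2n\ge 4$, so the iteration reaches the exponent $1$ — yields the weak Harnack estimate
\[
   \fint_{B(x_1,2r_0)} v\,dV\ \le\ C\Big(\inf_{B(x_1,r_0)} v+C_2\,r_0^2\Big)\ =\ C\,C_2\,r_0^2,
\]
because $\inf_{B(x_1,r_0)}v=v(x_1)=0$. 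Equivalently, one may first obtain the global bound $\sup_X v\le C(1+\|v\|_{L^1(X)})$ by the same iteration without a cutoff, and then invoke the displayed local bound to control $\|v\|_{L^1(X)}$; both routes use the same mechanism.

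Finally I would combine the two estimates. Splitting $X=B(x_1,2r_0)\cup\bigl(X\setminus B(x_1,2r_0)\bigr)$ and using the uniform lower volume bound $|B(x,2r_0)|\ge\theta_0\,\mathrm{Vol}(X)$ for some $\theta_0=\theta_0(\eta,n,r_0)>0$,
\[
   \fint_X v\,dV\ \le\ \frac{1}{\mathrm{Vol}(X)}\int_{B(x_1,2r_0)} v\,dV\ +\ (1-\theta_0)\sup_X v\ \le\ C+(1-\theta_0)\sup_X v .
\]
Substituting into the first displayed inequality gives $\sup_X v\le C+(1-\theta_0)\sup_X v$, whence $\theta_0\sup_X v\le C$ and $\sup_X v\le C$, which is exactly $\sup_X u-\inf_X u\le C$. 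The step I expect to be the main obstacle is the localized Moser iteration: it must be carried out so that the constant in the weak Harnack inequality depends only on the fixed geometry of $(X,\eta)$, on $n$, and linearly on $C_2$ — in particular not on $\sup_X v$, the very quantity being estimated — which forces one to work on balls of a fixed small radius with uniform Sobolev constants and to track the inhomogeneous term carefully through the iteration. (I note that only the lower Laplacian bound $\Delta u>-C_2$ and the compactness of $X$ are actually used above; the bound $\Delta u\le C_1+C_1(u-\inf_X u)$ is its natural companion, being precisely what Corollary~\ref{co:C2} produces alongside the lower bound, and it can also be fed into the iteration if one prefers.)
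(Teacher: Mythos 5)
There is a genuine gap in the key step, coming from a sign error in what the weak Harnack inequality requires. The weak Harnack inequality, which bounds an integral average of a nonnegative function by its infimum over a smaller ball, applies to \emph{supersolutions}, i.e.\ functions satisfying $\Delta v \le g$ with $g$ controlled. Your hypothesis $\Delta v > -C_2$ is the opposite, \emph{subsolution}, condition $\Delta v \ge -C_2$; what that gives is the local maximum principle, $\sup_{B_R} v \le C\bigl(\fint_{B_{2R}} v + C_2 R^2\bigr)$, which is in effect what your Green's function computation produces. It does \emph{not} give $\fint_{B_{2R}} v \le C\bigl(\inf_{B_R} v + C_2 R^2\bigr)$. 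Consequently the displayed estimate
\[
   \fint_{B(x_1,2r_0)} v\,dV \le C\bigl(\inf_{B(x_1,r_0)}v + C_2 r_0^2\bigr)
\]
is false under the hypotheses you invoke, and the argument collapses. In particular, the parenthetical remark at the end, that only the lower bound $\Delta u > -C_2$ and compactness are used, cannot be right. To see this concretely, mollify the Green's function pole: let $G_\epsilon$ be a smooth regularization of $G(x_1,\cdot)$ at scale $\epsilon$ and set $v_\epsilon = \sup G_\epsilon - G_\epsilon$. Then $v_\epsilon \ge 0$, $\inf_X v_\epsilon = 0$, $\Delta v_\epsilon \ge -\mathrm{Vol}(X)^{-1}$ uniformly in $\epsilon$, yet $\sup_X v_\epsilon$ and $\fint_X v_\epsilon$ both blow up as $\epsilon \to 0$ (since $2n \ge 4$). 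Of course this $v_\epsilon$ violates the \emph{upper} Laplacian bound $\Delta v \le C_1(1+v)$ near $x_1$, so it is not a counterexample to the lemma itself, only to the claim that the lower bound alone suffices.

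The fix is to use the other hypothesis where you presently discard it: $\Delta v \le C_1(1+v)$, rewritten as $(\Delta - C_1)v \le C_1$ with zeroth-order coefficient $-C_1 \le 0$, is exactly a supersolution condition to which the weak Harnack applies (with constant depending on $C_1$, $\eta$, $n$, $r_0$), giving $\fint_{B_{2r_0}} v \le C\bigl(\inf_{B_{r_0}}v + C_1 r_0^2\bigr) = C C_1 r_0^2$. With that replacement your Green's-function/cover argument does close, and it is a genuinely different, and arguably more transparent, route than the paper's. The paper instead works globally (no cutoffs) and uses the two differential inequalities in the same complementary way: Proposition~\ref{pr:sup} is a global Moser iteration for the subsolution inequality $\Delta v \ge -C_2$ (the analogue of your Green's function step), while Proposition~\ref{pr:inf} handles the supersolution inequality $\Delta v - C_1 v \le 0$ by applying Proposition~\ref{pr:sup} to $v^{-1}$ together with a $\log v$/John--Nirenberg--type estimate to relate $\|v\|_{p_0}$ and $\|v^{-1}\|_{p_0}$; the two are then combined since $\inf_X v = 1$. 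Both approaches must, and in a correct version do, use both bounds.
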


 The proof uses Moser's iteration, consisting of the following two propositions. For simplicity, we denote, throughout this section, that
   \[
      \int h = \int_X h \, \eta^n, \qquad \textup{for all $h \in L^1(X,\eta)$},
   \]
   and for $p > 0$,
   \[
      \| h\|_p = \left(\int h^p \right)^{1/p}, \qquad \textup{for all $h \in L^p(X,\eta)$}.
   \]
   And we abbreviate $\Delta = \Delta_{\eta}$ in this section.
 \begin{prop} \label{pr:sup}
   Let $v \in C^2(X)$, $v > 0$ on $X$, satisfy that
   \begin{equation} \label{eq:sup}
      \Delta v + c v \ge d \qquad \textup{on $X$},
   \end{equation}
   where $c$ and $d$ are constants. Then, for any real number $p > 0$,
   \[
      \sup_X v \le C^{1/p} (1 + |c|)^{n/p} ( \| v\|_p + |d|),
   \]
   where $C>0$ is a constant depending only on $\eta$ and $n$.
 \end{prop}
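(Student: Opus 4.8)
The plan is to prove this by Moser's iteration: multiply the differential inequality by suitable powers of $v$, integrate by parts, and feed the result into the Sobolev inequality on the compact Kähler manifold $(X,\eta)$, then bootstrap.

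\emph{Reduction to $d=0$.} First I would remove the inhomogeneity by setting $\hat v = v+|d|>0$. Since $\Delta$ annihilates constants, $\hat v$ satisfies $\Delta\hat v + c\hat v \ge \hat d$ with $\hat d = d + c|d|$, so $|\hat d|\le (1+|c|)|d|$; moreover $\hat v\ge |d|$, hence (when $d\ne 0$) $\hat v^{\,2\beta-1}\le \hat v^{\,2\beta}/|d|$, which is exactly what is needed to absorb the $\hat d$-term into the $|c|$-term after testing the inequality against $\hat v^{\,2\beta-1}$. Since $\sup_X v\le \sup_X\hat v$ and $\|\hat v\|_p\le \|v\|_p + |d|\,\mathrm{Vol}(X,\eta)^{1/p}$, a bound of the asserted shape for $\hat v$ gives the assertion for $v$, the volume factor being absorbed into $C^{1/p}$.

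\emph{One iteration step.} For $\beta\ge 1$, multiply $\Delta\hat v + c\hat v\ge \hat d$ by $\hat v^{\,2\beta-1}>0$ and integrate over $X$. Integration by parts (legitimate since $X$ is closed and $d\eta=0$), the identity $\hat v^{\,2\beta-2}|\nabla\hat v|^2 = \beta^{-2}|\nabla(\hat v^\beta)|^2$, and the absorption noted above give
\[
   \frac{2\beta-1}{\beta^2}\int_X|\nabla(\hat v^\beta)|^2\,\eta^n \;\le\; 2(1+|c|)\int_X\hat v^{\,2\beta}\,\eta^n .
\]
Applying the Sobolev inequality $\|\phi\|_{2\chi}^2\le C_S\bigl(\|\nabla\phi\|_2^2+\|\phi\|_2^2\bigr)$ to $\phi=\hat v^\beta$, where $\chi=n/(n-1)>1$ precisely because $n\ge 2$ — this is where the dimension hypothesis enters, making the Sobolev exponent a genuine gain — yields a reverse-Hölder inequality of the form
\[
   \|\hat v\|_{2\chi\beta}\;\le\;\bigl(C\beta(1+|c|)\bigr)^{1/(2\beta)}\|\hat v\|_{2\beta}.
\]
Iterating with $\beta_k=\chi^k\beta_0$ and letting $k\to\infty$ (so $\|\hat v\|_{2\beta_k}\to\sup_X\hat v$, as $X$ is compact) produces, since $\sum_{k\ge 0}(2\beta_k)^{-1}=n/(2\beta_0)$, an estimate $\sup_X\hat v\le K(1+|c|)^{n/(2\beta_0)}\|\hat v\|_{2\beta_0}$. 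For $0<p\le 2$ I would take $\beta_0=1$ and finish with the standard absorption step: from $\|\hat v\|_2^2=\int_X\hat v^2\eta^n\le(\sup_X\hat v)^{2-p}\|\hat v\|_p^p$, substitute and solve for $\sup_X\hat v$; this is exactly what converts $(1+|c|)^{n/2}$ into $(1+|c|)^{n/p}$ and produces the factor $C^{1/p}$. For $p\ge 2$ one instead starts the iteration at $\beta_0=p/2$ and checks that the constant obtained from the (now $p$-dependent) product stays bounded as $p$ varies.

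\emph{Main obstacle.} The only real work is the bookkeeping: verifying that the product $\prod_k\bigl(C\beta_k(1+|c|)\bigr)^{1/(2\beta_k)}$ converges and reading off from it exactly the exponent $1/p$ on $C$ and $n/p$ on $(1+|c|)$, together with the elementary absorption at the last step. The remaining ingredients — integration by parts, the Sobolev embedding, and the passage $L^p\to L^\infty$ — are entirely routine on a closed manifold, so I expect no genuine difficulty here; this proposition is the linear-theory input that the $C^2$ estimate of Section~\ref{se:C2} is designed to feed into.
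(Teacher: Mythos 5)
Your proposal is correct and follows essentially the same route as the paper: shift to $\tilde v=v+|d|$, test the inequality against a power of $\tilde v$, integrate by parts, absorb the $d$-term via $\tilde v\ge|d|$, feed into the Sobolev inequality, run the Moser iteration with exponent $\kappa=n/(n-1)$, and finally pass from $p\ge2$ to small $p$ by the standard absorption $\|\tilde v\|_2^2\le(\sup\tilde v)^{2-p}\|\tilde v\|_p^p$. The only cosmetic difference is that you first rewrite the inequality for $\hat v$ itself (with modified constant $\hat d=d+c|d|$) before testing, whereas the paper keeps the original inequality in $v$ and tests against $\tilde v^p$; both rest on the same estimate $\tilde v\ge|d|$.
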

 \begin{proof}
   Let
   \[
     \td{v} = v + |d|.
   \]
   Then,
   \[
     \td{v} \ge v > 0.
   \]
   Multiplying both sides of \eqref{eq:sup} by $- \td{v}^p$, $p \ge 1$, and then integrating by parts yield that
   \[
     p \int |\nabla \td{v}|^2 \td{v}^{p-1} \le c \int \td{v}^p v - d \int \td{v}^p.
   \]
   Then,
   \begin{align*}
      \int_X |\nabla \td{v}^{\frac{p+1}{2}} |^2
      & \le p |c| \int \td{v}^{p+1} + p |d| \int \td{v}^p \\
      & \le p(|c|+1) \int \td{v}^{p+1}, \qquad p \ge 1.
   \end{align*}

Now invoke the Sobolev inequality
   \[
      \|h\|_{2n/(n-1)}^2 \le C (\|\nabla h\|_2^2 + \|h \|_2^2), \qquad \textup{for all $h \in C^1(X)$}.
   \]
   Here and below, we denote by $C>0$ a generic constant depending only on $\eta$ and $n$.
   Substituting $h = \td{v}^{\frac{p+1}{2}}$ into the Sobolev inequality gives that
   \begin{equation} \label{eq:ita1}
      \| \td{v} \|_{(p+1)\kappa} \le [C (1 + |c|) (p+1)]^{\frac{1}{p+1}} \| \td{v} \|_{p+1}, \qquad \textup{for all $p \ge 1$.}
   \end{equation}
   Here
   \[
     \kappa = \frac{n}{n-1}.
   \]

   Now we fix a real number $p \ge 2$, and define a sequence $\{p_i\}$ as follows:
   \[
      p_0 = p, \qquad p_i = p_{i-1} \kappa = p \kappa^i, \quad \textup{for all $i = 1, 2, \ldots$}
   \]
   Iterating \eqref{eq:ita1} with respect to $\{p_i\}$ yields that
   \begin{align*}
     \| \td{v} \|_{p_{k+1}}
     & \le \exp\left(\log \big[C(1+|c|)\big] \sum_{i=0}^k \frac{1}{p_i} + \sum_{i=0}^k \frac{\log p_i}{p_i}\right) \| \td{v}\|_{p_0}\\
     & \le [C (1 + |c|)]^{n/p} \|\td{v}\|_{p}, \quad \textup{for any $k \ge 0$},
   \end{align*}
   where we use the fact that
   \[
      \sum_{i=0}^{\infty} \frac{1}{\kappa^i} = n.
   \]

   Letting $k$ tend to infinity gives that for $p\geq 2$,
   \begin{equation}\lab{4444}
    \sup_X \td{v} \le [C(1 + |c|)]^{n/p} \| \td{v} \|_p.
   \end{equation}
   For $0 < p < 2$, it follows from above inequality that
   \begin{align*}
      \sup \td{v} & \le [C(1 + |c|)]^{n/2} \left(\int \td{v}^2 \right)^{1/2} \\
      & \le [C(1 + |c|)]^{n/2} \left(\int \td{v}^p \right)^{1/2} (\sup \td{v})^{(1 - p/2)}.
   \end{align*}
   Then we still have (\ref{4444}). So for any real number $p>0$,
   $$\sup_X v\leq \sup_X\tilde v\leq C^{1/p}(1+|c|)^{n/p}(\|v\|_p+|d|).$$
 \end{proof}

 \begin{prop} \label{pr:inf}
   Let $v \in C^2(X)$, $v > 0$ on $X$ and satisfy
   \begin{equation}\label{eq:inf}
      \Delta v - c v \le 0 \qquad \textup{on $X$},
   \end{equation}
   where $c$ is a constant. Then, there exists a real number $p_0  > 0$, depending on $\eta$, $n$, and $c$, such that
   \[
      \inf_X v \ge C^{-1/p_0} (1 + |c|)^{-n/p_0} \| v \|_{p_0},
   \]
   where $C> 0$ depends only on $\eta$ and $n$.
 \end{prop}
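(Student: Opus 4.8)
The plan is to establish the reverse-H\"older inequality for $v$ by the classical Moser iteration for negative exponents, combined with the John--Nirenberg inequality (in the form of an $L^{p_0}$ estimate for functions of bounded logarithmic oscillation). First I would set $\td{v}$ to be $v$ itself (it is already positive, so no shift is needed), multiply \eqref{eq:inf} by $\td v^{\,p+1}$ with $p < -1$, i.e. by a negative power of $v$, and integrate by parts. Because the exponent is negative, the sign in front of the gradient term flips in our favor: one obtains $\int |\nabla \td v^{\,(p+1)/2}|^2 \le C(|c|+1)|p| \int \td v^{\,p+1}$ for $p+1 < 0$. Feeding this into the same Sobolev inequality used in Proposition~\ref{pr:sup} and iterating over the geometric sequence $p_i = p\kappa^i$ (now with $p<0$, so the exponents decrease in absolute value toward $0$) yields
\[
   \Big( \int v^{\,p_0} \Big)^{1/p_0} \le C^{1/|p_0|}(1+|c|)^{n/|p_0|} \inf_X v
\]
for any fixed negative $p_0$; rewriting $\int v^{\,p_0}$ with $p_0<0$ as $\|1/v\|$-type quantity and renaming $-p_0$ as a positive exponent gives the shape of the claimed estimate, except that so far the exponent on the left is an arbitrarily chosen negative number, whereas the statement wants it to be a \emph{specific} small positive $p_0$.

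The missing ingredient, and the step I expect to be the main obstacle, is to pass from ``$\|v^{-1}\|_q \le C \inf v$ for some $q>0$'' and ``$\|v\|_q$ controlled by $\sup v$ for some $q>0$'' to ``$\|v\|_{p_0} \le C \inf v$ for one and the same small $p_0>0$.'' This is exactly the place where one needs a quantitative bound: a bound on $\int e^{p_0 |\log v - \overline{\log v}|}$ for $p_0$ small. The standard route is to show first that $w = \log v$ satisfies a Poincar\'e-type inequality or that $w$ lies in $\mathrm{BMO}(X,\eta)$ with a norm controlled by $|c|$; this follows by testing \eqref{eq:inf} against $v^{-1}\zeta^2$ for cutoffs $\zeta$, or on a closed manifold simply by testing against $v^{-1}$, which produces $\int |\nabla \log v|^2 \le C(1+|c|)$. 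Combined with the Neumann--Poincar\'e inequality on the compact manifold $X$ this shows $\log v$ has bounded oscillation in $L^2$, and then the John--Nirenberg lemma (valid on a compact Riemannian manifold) gives a $p_0 = p_0(\eta,n,c) > 0$ with $\int e^{p_0 w} \cdot \int e^{-p_0 w} \le C$. Multiplying the two one-sided Moser estimates at exponent $p_0$ then closes the loop:
\[
   \|v\|_{p_0} = \Big(\int e^{p_0 w}\Big)^{1/p_0} \le C \Big(\int e^{-p_0 w}\Big)^{-1/p_0} \le C^{1/p_0}(1+|c|)^{n/p_0} \inf_X v.
\]

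A couple of remarks on how I would organize this cleanly. Since $X$ is compact without boundary, all the integrations by parts are global and no cutoff functions appear, which simplifies the negative-exponent iteration considerably compared to the interior case; the only inequalities invoked are the Sobolev inequality (already used above) and the Poincar\'e inequality for $\eta$, both of which are available on a fixed compact K\"ahler manifold with constants depending only on $\eta$ and $n$. I would state the John--Nirenberg step as a black box (``there is $p_0>0$ depending on $\eta,n$, and the $\mathrm{BMO}$-bound, hence on $|c|$, such that \dots''), since reproving it would be a digression. The dependence of $p_0$ on $c$ enters only through the $\mathrm{BMO}$-norm bound $\|\log v\|_{\mathrm{BMO}} \le C(1+|c|)$, so one should be slightly careful to track that $p_0$ can be taken of the form $p_0 = c_0/(1+|c|)$ for a dimensional constant $c_0$; this is harmless for the way Lemma~\ref{le:C0} will use the proposition, since there $c$ is itself a fixed constant $C_1$ depending only on $\eta,n$. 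Finally, once both Proposition~\ref{pr:sup} and Proposition~\ref{pr:inf} are in hand, Lemma~\ref{le:C0} follows by applying them to $v = u - \inf_X u + $ (an additive constant making $v$ bounded below by a positive number), with $c = C_1$ and $d = C_1$ times that constant plus $C_1\inf_X u$ absorbed appropriately.
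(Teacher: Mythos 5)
Your approach is essentially sound and closely parallels the paper in its overall architecture: both reduce the claim to showing $\int_X v^{-p_0}\cdot\int_X v^{p_0}\le C$ and hence to an exponential-integrability bound $\int_X e^{p_0|w|}\le C$ for $w=\log v-\overline{\log v}$. You then propose to reach this bound by showing $\log v\in\mathrm{BMO}(X,\eta)$ and citing the John--Nirenberg lemma, whereas the paper instead runs a direct, self-contained iteration: it tests the inequality against $\phi\,v^{-1}$ with $\phi=w^{2p}$ (no cutoffs, using compactness), obtains the growth estimate $\|w\|_m\le C(1+|c|)^{n/2}\,m$ for every integer $m\ge 2$, and then sums the Taylor series of $e^{p_0|w|}$ term by term, choosing $p_0=\bigl(2Ce(1+|c|)^{n/2}\bigr)^{-1}$ to make the series converge. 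The paper's route costs an extra page of iteration but keeps the whole argument elementary (Sobolev, Poincar\'e, Young), with every constant explicit; your route offloads the hard part to a known theorem, which is shorter but requires one to have the manifold version of John--Nirenberg at hand and to verify its hypotheses.

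One technical point you should tighten: the global test $\phi\equiv 1$ only yields $\int_X|\nabla\log v|^2\le C(1+|c|)$, which together with Poincar\'e controls the \emph{global} $L^2$ oscillation of $\log v$, but that is strictly weaker than a BMO bound. To invoke John--Nirenberg you genuinely need local, scale-invariant gradient estimates on balls of all radii, which is why your first suggestion (testing against $v^{-1}\zeta^2$ with cutoffs $\zeta$) is the one that must be carried out; the shortcut ``on a closed manifold simply test against $v^{-1}$'' does not by itself establish BMO membership. If you go the cutoff route you also need to make the resulting $\mathrm{BMO}$-norm depend on $(1+|c|)$ in a way you can track, and the cleanest outcome is $p_0\sim(1+|c|)^{-n/2}$ as in the paper rather than the $p_0\sim(1+|c|)^{-1}$ you tentatively suggest; this discrepancy is harmless for Lemma~\ref{le:C0} but worth noting. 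Finally, your opening step with negative exponents is fine, but it is literally the same computation the paper performs by applying Proposition~\ref{pr:sup} to $v^{-1}$, so there is no saving there.
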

 \begin{proof}
    By \eqref{eq:inf} we have
    \[
       \Delta (v^{-1}) + c v^{-1} \ge - v^{-2} (\Delta v - c v) \ge 0.
    \]
    Applying Proposition~\ref{pr:sup} to $v^{-1}$ yields that, for any $p>0$,
    \[
       \sup (v^{-1}) \le C^{1/p} (1 + |c|)^{n/p} \| v^{-1} \|_p.
    \]
    It follows that
    \[
      \inf v \ge C^{-1/p} (1 + |c|)^{-n/p} \| v\|_p \left( \int v^{-p} \cdot \int v^p \right)^{-1/p}.
    \]
    Then, it suffices to show that, there exists some $p_0 > 0$ such that
    \begin{equation} \label{eq:inf2}
        \int v^{-p_0} \cdot \int v^{p_0} \le C.
    \end{equation}
    Here and below, we always denote by $C>0$ a generic constant depends only on $\eta$ and $n$, unless otherwise indicated.
    Denote
    \[
       w = \log v - \frac{\int \log v}{ \int \eta^n}.
    \]
    To show \eqref{eq:inf2}, it is sufficient to establish
    \begin{equation} \label{eq:inf3}
       \int e^{p_0 |w|} \le C,
    \end{equation}
    for, \eqref{eq:inf3} implies both
        \[
      \int e^{p_0 w}  \le C, \qquad \textup{and $\quad \int e^{-p_0 w}  \le C$};
        \]
    and multiplying these two inequalities gives \eqref{eq:inf2}.

    Note that
    \[
       e^{p_0|w|} = \sum_{m=0}^{\infty} \frac{p_0^m |w|^m}{m!}.
    \]
    Let us estimate $\|w\|_m$   for each $m \ge 1$. Multiplying both sides of \eqref{eq:inf} by $\phi v^{-1}$, where $\phi \in C^1(X)$ and $\phi \ge 0$, and then integrating by parts yield that
    \begin{equation} \label{eq:phi}
       \int \phi |\nabla w|^2 \le c \int \phi + \int \nabla \phi \cdot \nabla w.
    \end{equation}
    We first set $\phi \equiv 1$ in \eqref{eq:phi} to obtain that
    \begin{equation} \label{eq:gradw}
       \int |\nabla w|^2 \le |c| \int \eta^n .
    \end{equation}
    We can apply Poincar\'e inequality to get
    \begin{equation} \label{eq:L2}
       \int w^2 \le C |c|.
    \end{equation}
    Then, by H\"older inequality,
    \begin{equation} \label{eq:L1}
       \int |w| \le C |c|^{1/2}.
    \end{equation}

    It remains to estimate $\|w\|_m$ for $m \ge 3$. We now set $\phi = w^{2p}$, $p \ge 1$, in \eqref{eq:phi}. Then,
    \[
       \int |w|^{2p} |\nabla w|^2 \le |c| \int |w|^{2p} + 2p \int |w|^{2p -1} |\nabla w|^2.
    \]
    By Young's inequality,
    \[
       2p |w|^{2p-1} \le \frac{2p -1}{2p} |w|^{2p} + (2p)^{2p -1}.
    \]
    It follows that
    \[
      \int |w|^{2p} |\nabla w|^2 \le 2p |c| \int |w|^{2p} + (2p)^{2p} \int |\nabla w|^2.
    \]
    Observe that
    \[
       |\nabla w^p|^2 = p^2 w^{2p-2} |\nabla w|^2 \le w^{2p} |\nabla w|^2 + p^{2p} |\nabla w|^2.
    \]
    We then have
    \[
       \int |\nabla w^p|^2 \le 2p|c| \int |w|^{2p} + 2 (2p)^{2p} \int |\nabla w|^2, \qquad \textup{for all $p \ge 1$}.
    \]
    Apply the Sobolev inequality to obtain that
    \begin{align*}
       \left( \int w^{2p\kappa} \right)^{\kappa}
       & \le C p (1 + |c|) \int |w|^{2p} + C (2p)^{2p} \int |\nabla w|^2\\
       & \le C p (1 + |c|) \left( \int |w|^{2p} + (2p)^{2p}\right), \qquad \textup{by \eqref{eq:gradw}.}
    \end{align*}
    Here we denote
    \[
        \kappa = \frac{n}{n-1}.
    \]
    Hence, we have for all $p \ge 1$ that
    \begin{equation} \label{eq:infita}
        \|w \|_{2p\kappa} \le [C(1+|c|)]^{\frac{1}{2p}} (2p)^{\frac{1}{2p}} \big( \|w\|_{2p} + 2p \big),
    \end{equation}
    in view of the inequality
    \[
       (a + b)^{\epsilon} \le a^{\epsilon} + b^{\epsilon}, \qquad \textup{for all $0< \epsilon <1$, $a \ge 0$, and $b\ge0$}.
    \]
    We shall iterate \eqref{eq:infita} with respect to the sequence $\{p_i\}_{i=0}^{\infty}$ given below:
    \[
       p_0 = 2, \qquad p_{i} = p_{i-1} \kappa = 2 \kappa^i \quad \textup{for all $i \ge 1$}.
    \]
    Thus, we obtain for each $k \ge 0$ that
    \begin{align*}
      \|w \|_{p_{k+1}}
      & \le C \sum_{i=0}^k p_i + \exp\left(\log [C(1+|c|)] \sum_{i=0}^k \frac{1}{p_i} + \sum_{i=0}^k \frac{\log p_i}{p_i} \right) \|w\|_{p_0} \\
      & \le C p_k + C ( 1 + |c|)^{n/2} \| w \|_2,
    \end{align*}
    in which we use the fact that
    \[
       \sum_{i=0}^k \kappa^i \le n \kappa^k.
    \]
 Now note that for any integer $m \ge 2$, there exists an integer $i \ge 0$ such that
 \[
    2 \kappa^i \le m < 2 \kappa^{i+1}.
 \]
 Then,
 \begin{align*}
    \|w\|_{m}
    & \le \|w\|_{p_{i+1}} \le C m + C (1 + |c|)^{n/2} \|w\|_2 \\
    & \le C m + C (1 + |c|)^{n/2} \qquad \textup{\Big(by \eqref{eq:L2}\Big)} \\
    & \le C ( 1 + |c|)^{n/2} m.
 \end{align*}
 Hence,
 \[
    \int \frac{|w|^m}{m!} \le C^m (1 + |c|)^{\frac{nm}{2}} \frac{m^m}{m!} \le C^m (1 + |c|)^{\frac{nm}{2}} e^m.
 \]
 Let
 \[
    p_0 = \frac{1}{2C(1 + |c|)^{n/2} e};
 \]
 and then
 \[
    \int \frac{p_0^m|w|^m}{m!} \le \frac{1}{2^m}, \qquad \textup{for all $m \ge 2$}.
 \]
 This together with \eqref{eq:L1} yields \eqref{eq:inf3}. This completes the proof.
\end{proof}

We are in a position to prove Lemma~\ref{le:C0}.
\begin{pflemC0}
  Let
  \[
     v = u - \inf_X u + 1.
  \]
  Then,
  \[
     v \ge 1, \qquad \textup{and \quad $\inf_X v = 1$},
  \]
  since $X$ is compact and so $u$ attains its minimum.
  On the other hand, we have
  \begin{equation} \label{eq:C0inf}
     \Delta v - C_1 v \le 0, 
  \end{equation}
  and
  \begin{equation} \label{eq:C0sup}
     \Delta v > - C_2.
  \end{equation}
  Applying Proposition~\ref{pr:inf} to \eqref{eq:C0inf} obtains that
  \[
     \inf_X v \ge C^{-1/p_0} ( 1 + |C_1|)^{-n/p_0} \|v\|_{p_0}.
  \]
  Here $p_0 > 0$ is a number depending only on $\eta$, $n$, and $C_1$; and $C > 0$ is a constant depending only on $\eta$ and $n$.
  Applying Proposition~\ref{pr:sup} to \eqref{eq:C0sup} with $p = p_0$ yields that
  \[
     \sup_X v \le (C')^{1/p_0} ( \| v\|_{p_0} + C_2),
  \]
  where $C'>0$ depends only on $\eta$ and $n$. Combining these two inequalities we have
  \begin{align*}
     \sup_X v
     & \le (C')^{1/p_0} \left[C^{1/p_0} ( 1 + |C_1|)^{n/p_0} \inf_X v + C_2 \right] \\
     & = (C')^{1/p_0} \left[C^{1/p_0} ( 1 + |C_1|)^{n/p_0} + C_2 \right].
  \end{align*}
  It follows that
  \[
    \sup_X u - \inf_X u \le \sup_X v \le C,
  \]
  where $C>0$ depends only on $\eta$, $n$, $C_1$, and $C_2$.
  \qed
\end{pflemC0}

Let us now return to equation~\eqref{eq:FT}. We let $(X, \eta)$ be the complex $n$-dimensional K\"ahler manifold of nonnegative quadratic bisectional curvature, and $\omega_0$ be a Hermitian metric on $X$.
\begin{coro} \label{co:C0}
  Given any $f \in C^{\infty}(X)$, let $u \in C^{\infty}(X)$ be a solution of
  \begin{equation*}
     \frac{\det (\omega_u^{n-1})}{\det (\omega_0^{n-1})} = e^{(n-1)f} \left( \frac{\int_X \omega_u^n}{\int_X e^f \omega_0^n} \right)^{n-1} ,
  \end{equation*}
  where $\omega_u$ is a positive $(1,1)$--form such that
  \[
     \omega_u^{n-1} = \omega_0^{n-1} + \ppr u\wedge \eta^{n-2} > 0 \quad \textup{on $X$}.
  \]
  Then,
  \[
      \sup_X |\omega_u^{n-1}|_{\eta} \le C,
  \]
  where $C> 0$ is a constant depending only on $f$, $\eta$, $n$, and $\omega_0$.
\end{coro}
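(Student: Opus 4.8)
The plan is simply to feed the two differential inequalities for $u$ into Lemma~\ref{le:C0} and then close the loop with Corollary~\ref{co:C2}. The first inequality required by Lemma~\ref{le:C0}, namely $\Delta_{\eta} u \le C_1 + C_1 (u - \inf_X u)$ on $X$, is provided verbatim by the estimate \eqref{eq:Lap} of Corollary~\ref{co:C2}, with $C_1$ depending only on $f$, $\eta$, $n$, and $\omega_0$. So the only thing left to produce is the lower bound $\Delta_{\eta} u > - C_2$.

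For that lower bound I would use positivity of $\omega_u^{n-1}$. By \eqref{eq:Lphi} one has, globally on $X$,
\[
   h + \Delta_{\eta} u = \frac{n\, \eta \wedge (\omega_0^{n-1} + \ppr u \wedge \eta^{n-2})}{\eta^n} = \frac{n\, \eta \wedge \omega_u^{n-1}}{\eta^n},
\]
where $h = n\, \eta \wedge \omega_0^{n-1}/\eta^n$. Since $\omega_u^{n-1} > 0$, the $(n,n)$--form $\eta \wedge \omega_u^{n-1}$ is positive, so the right-hand side is strictly positive; hence $\Delta_{\eta} u > - h \ge - \sup_X h =: - C_2$, and $C_2$ depends only on $\eta$ and $\omega_0$. (Equivalently, in $\eta$-normal coordinates at a point, $\Delta_{\eta} u = \sum_i u_{i\bar i}$ and $\sum_i (\Psi_u)_{i\bar i} = h + \Delta_{\eta} u > 0$ because the positive $(1,1)$--form $\omega_u$ makes $(\Psi_u)_{i\bar j} = (\omega_u^{n-1})_{i\bar j}$ positive definite.)

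With both inequalities in hand, Lemma~\ref{le:C0} yields $\sup_X u - \inf_X u \le C$ with $C$ depending only on $\eta$, $n$, $C_1$, $C_2$, hence only on $f$, $\eta$, $n$, and $\omega_0$. Substituting this bound into the second conclusion of Corollary~\ref{co:C2} gives
\[
   \sup_X |\omega_u^{n-1}|_{\eta} \le C + C(\sup_X u - \inf_X u) \le C,
\]
which is the claimed estimate.

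Since the genuinely substantial analysis — the $C^2$ estimate exploiting the nonnegative orthogonal bisectional curvature (Lemma~\ref{le:C2}, hence Corollary~\ref{co:C2}) and the Moser iteration (Propositions~\ref{pr:sup}--\ref{pr:inf}, hence Lemma~\ref{le:C0}) — has already been carried out, I expect no real obstacle at this step. The only point deserving care is the bookkeeping of constants: one must make sure that the constant furnished by Lemma~\ref{le:C0} depends only on $\eta$, $n$, $C_1$, $C_2$ and not on $u$ itself (which is precisely what that lemma asserts), and that $C_1$ and $C_2$ in turn depend only on the data $f$, $\eta$, $n$, $\omega_0$.
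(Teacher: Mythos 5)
Your proof is correct and follows the paper's argument exactly: obtain the lower bound $\Delta_{\eta} u > -C_2$ by contracting positivity of $\omega_u^{n-1}$ with $\eta$, combine with the upper bound \eqref{eq:Lap} from Corollary~\ref{co:C2}, apply Lemma~\ref{le:C0} to bound the oscillation of $u$, and feed that back into Corollary~\ref{co:C2}. Nothing to add.
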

\begin{proof}
  By Corollary~\ref{co:C2}, it suffices to estimate $(\sup u - \inf u)$. Contracting
  \[
        \omega_0^{n-1} + \ppr u\wedge \eta^{n-2} > 0
  \]
  with $\eta$ yields that
  \[
      \Delta_{\eta} u > - \frac{n \eta \wedge \omega_0^{n-1}}{\eta^n} > - C_2 \qquad \textup{on $X$}.
  \]
  Here the constant $C_2 > 0$ depends only on $\eta$, $n$, and $\omega_0$. We have \eqref{eq:Lap}, on the other hand. Therefore, the result is an immediate consequence of Lemma~\ref{le:C0}.
\end{proof}

\section{H\"older estimates for second derivatives} \label{se:Hld}

Let $X$ be a $n$-dimensional K\"ahler manifold, $\eta$ be a K\"ahler metric on $X$, and $\omega_0$ be a balanced metric on $X$. We will establish the following estimate.
\begin{lemm} \label{le:Hld}
Given $F \in C^2(X)$, let $u \in C^4(X)$ satisfy that 
\[
   \omega_0^{n-1} + \ppr u \wedge \eta^{n-2} > 0 \qquad \textup{on $X$},
\]
and that
\begin{equation} \label{eq:det}
\det [\omega_0^{n-1}+\ppr u\wedge \eta^{n-2}]= e^{F} \det\omega_0^{n-1}.
\end{equation}
Suppose that
\begin{equation} \label{eq:trac}
   \sup_X |\omega_0^{n-1} + \sqrt{-1}/2\partial \bar{\partial} u \wedge \eta^{n-2}|_{\eta} \le C_3
\end{equation}
for some constant $C_3>0$. Then,
\[
   \| u \|_{C^{2,\alpha}(X)} \le C,
\]
where $0 < \alpha < 1$ and $C>0$ are constants depending only on $C_3$, $n$, $\omega_0$, and $\eta$.
\end{lemm}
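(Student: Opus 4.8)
The plan is to read \eqref{eq:det} as a concave, uniformly elliptic fully nonlinear second-order equation and apply the Evans--Krylov interior $C^{2,\alpha}$ estimate in local coordinates, then patch over the compact manifold $X$. First I would localize: fix a holomorphic coordinate ball $B=B_{2r}$ and pass to the underlying real coordinates on $\mathbb R^{2n}$. Exactly as in the computation leading to \eqref{eq:cof}, the coefficients $(\Psi_u)_{i\bar j}$ of $\Psi_u:=\omega_0^{n-1}+\ppr u\wedge\eta^{n-2}$ are affine-linear in the complex Hessian $(u_{k\bar l})$, with smooth $x$-dependent coefficients built from $\eta$ and $\omega_0$; in particular $D^2u\mapsto(\Psi_u)_{i\bar j}$ is an $\mathbb R$-affine map of the real Hessian whose linear part is invertible on $(1,1)$-tensors, with norms depending only on $n$, $\eta$, $\omega_0$ (compare the nondegenerate matrix implicit in \eqref{eq:cof}). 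Thus \eqref{eq:det} takes the form $\mathcal F(x,D^2u):=\log\det\bigl((\Psi_u)_{i\bar j}\bigr)=F(x)+\log\det(\omega_0^{n-1})(x)$ on $B$, a second-order equation for $u$ with $C^2$ right-hand side.

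Next I would check the two structural properties that make Evans--Krylov applicable. For ellipticity: from $\Psi_u>0$ and \eqref{eq:trac} the $\eta$-eigenvalues of $(\Psi_u)_{i\bar j}$ lie in $(0,\Lambda]$ with $\Lambda\le C C_3$, and since \eqref{eq:det} forces $\det(\Psi_u)_{i\bar j}=e^F\det(\omega_0^{n-1})$, which is bounded below, the least eigenvalue is $\ge\lambda>0$. The linearization of $\mathcal F$ is the operator $L=(n-1)\sum_{k,l}(\Psi_u)^{k\bar l}(\ppr\,\cdot\,\wedge\eta^{n-2})_{k\bar l}$ of Section~\ref{se:C2}; by \eqref{eq:cof}, in an $\eta$-unitary coframe it has the shape $\sum_{p,q}B^{p\bar q}\partial_p\partial_{\bar q}$ with $(B^{p\bar q})$ Hermitian and eigenvalues in $[(n-1)/\Lambda,\,(n-1)/\lambda]$, hence $L$ is uniformly elliptic \emph{as a real} second-order operator (its symbol at a real covector $\xi$ is $\sum B^{p\bar q}\zeta_p\bar\zeta_q\ge c|\xi|^2$, with $\zeta_p=\xi_{x_p}+\sqrt{-1}\,\xi_{y_p}$). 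For concavity: $\mathcal F(x,\cdot)$ is the composition of $\log\det$, which is concave on positive Hermitian matrices, with the affine map $D^2u\mapsto(\Psi_u)_{i\bar j}$, hence concave on the convex set $\{\Psi_u>0\}$.

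Then I would invoke the Evans--Krylov estimate: a $C^4$ solution of a uniformly elliptic concave equation with $C^2$ data satisfies an interior bound $[D^2u]_{C^{\alpha}(B_r)}\le C$ for some $\alpha\in(0,1)$, with $C$ depending only on $\lambda,\Lambda,n,r$, the $C^2$-norms of $\eta,\omega_0$ on $B$, $\|u\|_{C^0(B_{2r})}$, and $\|F\|_{C^2(B_{2r})}$. Concretely one differentiates \eqref{eq:det} twice in a direction $\xi\in\mathbb C^n$: concavity makes each $\partial_\xi\partial_{\bar\xi}u$ an $L$-subsolution up to a bounded term, while suitable averages of them are $L$-supersolutions, so the Krylov--Safonov weak Harnack inequality, applied for finitely many $\xi$ whose tensors $\xi\otimes\bar\xi$ span the Hermitian matrices, forces geometric decay of $\osc_{B_\rho}(u_{i\bar j})$; then $\Delta_{\mathbb R^{2n}}u=4\sum_iu_{i\bar i}\in C^{\alpha}$ and interior Schauder for $\Delta_{\mathbb R^{2n}}$ upgrades $u$ to $C^{2,\alpha}$. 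To globalize I would first normalize $\inf_Xu=0$ (legitimate since \eqref{eq:det} depends on $u$ only through $\ppr u$), observe that \eqref{eq:trac} together with $\Psi_u>0$ and \eqref{eq:trace} give $-C\le\Delta_\eta u\le C$ so that Lemma~\ref{le:C0} yields $\|u\|_{C^0(X)}\le C$, cover $X$ by finitely many such balls whose concentric half-balls still cover $X$, and combine the local estimates.

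The step I expect to be the main obstacle is the interplay of the two structural properties above: because \eqref{eq:det} involves only the $(1,1)$-part of the Hessian, $\mathcal F$ fails to be uniformly elliptic as a \emph{general} real equation, so one cannot literally quote a black-box theorem; the point that rescues the real-variable Evans--Krylov argument is that the linearization $L$ is nonetheless uniformly elliptic in the real sense --- equivalently, one may run the entire argument on the complex Hessian $(u_{i\bar j})$ and deduce $C^{2,\alpha}$ for $u$ via $\Delta$-Schauder. Pinning down the lower ellipticity constant $\lambda$, i.e.\ a positive lower bound for $\det\Psi_u$, is precisely where the determinant equation \eqref{eq:det} is essential; note that $\lambda$, hence $\alpha$ and $C$, also depends on a lower bound for $F$, which on the compact $X$ is automatic and which, in the intended application, is controlled in terms of $f$ via the earlier $C^0$-estimate.
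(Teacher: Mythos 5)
Your proposal is correct and follows essentially the same route as the paper: read \eqref{eq:det} as a concave, uniformly elliptic fully nonlinear equation in the complex Hessian (concavity of $\log\det$ composed with the invertible affine map $(u_{k\bar l})\mapsto(\Psi_u)_{i\bar j}$; ellipticity from the upper bound \eqref{eq:trac} together with the lower bound on $\det(\Psi_u)_{i\bar j}$ forced by \eqref{eq:det}), run Evans--Krylov on $(u_{i\bar j})$ using a Krylov--Safonov weak Harnack estimate for the linearization $L=\sum(\Psi_u)^{i\bar j}(\ppr\,\cdot\,\wedge\eta^{n-2})_{i\bar j}$, and finish with Schauder for $\Delta_{\mathbb R^{2n}}$. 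The one place where you gesture at something the paper makes precise is the weak Harnack step: you note $L$ is uniformly elliptic in the real sense and invoke Krylov--Safonov, whereas the paper explicitly derives the complex-variable Aleksandrov maximum principle (Proposition~\ref{pr:cxreal} and Lemma~\ref{pr:Aleks2}, i.e.\ $\det D^2w\le 8^n|\det w_{i\bar j}|^2$ on the upper contact set, so $\sup v$ is bounded by an $L^{2n}$ norm of $\sum a^{i\bar j}v_{i\bar j}/\det(a_{i\bar j})^{1/n}$) and the corresponding complex Krylov--Safonov estimate; these two viewpoints are equivalent up to dimensional constants, since for Hermitian $a$ the real $2n\times 2n$ matrix of $\sum a^{i\bar j}\partial_i\partial_{\bar j}$ has $\det\tilde a\sim|\det a^{i\bar j}|^2$. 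Your observation that $\lambda$, hence the Hölder exponent and constant, really depend also on $\inf F$ is correct and worth keeping; the paper's own proof acknowledges this dependence even though the lemma's statement omits $F$ from the list.
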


We shall apply the Evans--Krylov theory (see, for example, Gilbarg--Trudinger~\cite[p. 461, Theorem 17.14]{GT}.), which is on the real fully nonlinear elliptic equation. Note that Evans--Krylov theory is based on the \emph{weak Harnack estimate} (see, for example, \cite[p. 246, Theorem 9.22]{GT}), which, in turn, makes uses of the Aleksandrov's maximum principle (see, for example, \cite[p. 222, Lemma 9.3]{GT}).

We first adapt the Aleksandrov's maximum principle to the complex setting. To see this, we start from the following result (see, for example, Lemma 9.2 in \cite{GT}): Let $\Omega \subset \mathbb{C}^n$ be a bounded domain with smooth boundary.
\begin{Aleks}
 For $v \in C^2(\overline{\Omega})$ with $v \le 0$ on $\partial \Omega$, we have
\begin{equation} \label{eq:Aleks}
   \sup_{\Omega} v \le \frac{\dm(\Omega)}{\sigma_{2n}^{1/(2n)}} \left( \int_{\Gamma^+_v} |\det D^2 v| \right)^{\frac{1}{2n}}.
\end{equation}
Here $\sigma_{2n}$ is the volume of unit ball in $\mathbb{C}^n$, $D^2 v$ denotes the real Hessian matrix of $v$, and $\Gamma^+_v$ is the \emph{upper contact set} of $v$, i.e.,
\[
   \Gamma_v^+ = \{ y \in \Omega;  v(x) \le v(y) + Dv(y)\cdot (x - y) \;\; \textup{for all $x \in \Omega$} \}.
\]
\end{Aleks}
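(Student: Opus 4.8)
The plan is to reduce \eqref{eq:Aleks} to the classical Aleksandrov--Bakelman--Pucci measure estimate, identifying $\mathbb{C}^n$ with $\mathbb{R}^{2n}$ and regarding $v$ as a real-valued function of $2n$ real variables, so that $D^2 v$ is its full real Hessian. Set $M = \sup_\Omega v$ and $d = \dm(\Omega)$. Since the right-hand side of \eqref{eq:Aleks} is nonnegative, I may assume $M > 0$; and since $v \le 0$ on $\partial\Omega$, the value $M$ is attained at some interior point $x_0 \in \Omega$. The whole estimate will follow once I establish the covering statement $B_{M/d}(0) \subseteq Dv(\Gamma_v^+)$, where $Dv(\Gamma_v^+) \subset \mathbb{R}^{2n}$ is the image of the upper contact set under the gradient map.

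To prove the covering statement I would fix $p \in \mathbb{R}^{2n}$ with $|p| < M/d$ and run the standard sliding-plane argument. For the affine functions $\ell_t(x) = t + p\cdot(x - x_0)$ one has $\ell_t > v$ on $\overline{\Omega}$ when $t$ is large, so as $t$ decreases there is a first value $t^*$ at which $\ell_{t^*}$ touches the graph of $v$ from above, say $\ell_{t^*}(y) = v(y)$ with $\ell_{t^*} \ge v$ on $\overline{\Omega}$. Then $y \in \Gamma_v^+$ and $Dv(y) = p$ by the very definition of $\Gamma_v^+$. Moreover $y$ is interior: if $y \in \partial\Omega$ then $t^* = v(y) - p\cdot(y - x_0) \le |p|\,d < M$, so $\ell_{t^*}(x_0) = t^* < M = v(x_0)$, contradicting $\ell_{t^*} \ge v$. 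Letting $p$ range over $B_{M/d}(0)$ gives the claim.

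Next I would compare Lebesgue measures via the area (change-of-variables) formula for the gradient map $\chi := Dv : \Gamma_v^+ \to \mathbb{R}^{2n}$, whose differential is $D^2 v$. On $\Gamma_v^+$ the graph of $v$ lies below each of its supporting hyperplanes, so $D^2 v \le 0$ there; in particular (as $2n$ is even) $|\det D^2 v| = \det(-D^2 v) \ge 0$, and the area formula gives $|\chi(\Gamma_v^+)| \le \int_{\Gamma_v^+} |\det D^2 v|$. Combined with the covering statement,
\[
   \sigma_{2n}\bigl(\tfrac{M}{d}\bigr)^{2n} = |B_{M/d}(0)| \le |\chi(\Gamma_v^+)| \le \int_{\Gamma_v^+} |\det D^2 v|,
\]
and extracting the $2n$-th root yields exactly \eqref{eq:Aleks}.

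The one delicate point is the last measure step: the inequality $|\chi(E)| \le \int_E |\det D\chi|$ needs some regularity of $\chi = Dv$. Under the stated hypothesis $v \in C^2(\overline{\Omega})$ it is immediate --- $\chi \in C^1$, and the inequality follows from Sard's theorem applied to the critical set of $\chi$ together with the ordinary change of variables off it --- but it is worth arranging the argument so that it also covers $v$ merely of class $C^{1,1}$, i.e.\ $Dv$ Lipschitz, which is the generality actually required when this lemma feeds into the weak Harnack estimate and the Krylov--Safonov theory; there one invokes Rademacher's theorem and the almost-everywhere second-order (Alexandrov) expansion of $v$ in place of the pointwise Hessian, and then approximates. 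Everything else --- attainment of $M$ in the interior, the sliding-plane lemma, and the final root extraction --- is routine.
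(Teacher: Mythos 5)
Your proof is correct, but it is worth noting that the paper itself does not prove this lemma at all: it simply states it and cites Gilbarg--Trudinger, Lemma~9.2, as the source. Your argument is exactly the classical normal-mapping (sliding-plane) proof of the Aleksandrov--Bakelman--Pucci estimate that appears there, specialized only by reading $\Omega\subset\mathbb{C}^n$ as a domain in $\mathbb{R}^{2n}$. The two key steps --- the inclusion $B_{M/d}(0)\subseteq Dv(\Gamma_v^+)$, with the boundary case excluded because $v\le 0$ on $\partial\Omega$ forces $t^*<M$, and the measure inequality $|Dv(\Gamma_v^+)|\le \int_{\Gamma_v^+}|\det D^2 v|$ --- are both handled correctly, and your remark that the latter needs only Sard plus local change of variables for $v\in C^2$ (or the Lipschitz area formula in the $C^{1,1}$ generality that the weak Harnack estimate actually requires) is the right caveat. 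In short: the paper leaves this as a citation, and you have supplied the standard proof behind that citation.
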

Then, it suffices to control the real Hessian $D^2 v$ by the complex Hessian $(v_{i\bar{j}})$ of $v$, over $\Gamma_v^+$. Note that $\Gamma_v^+ \subset \{ y \in \Omega; (D^2 v) (y) \le 0 \}$. We shall make use of the following inequality:
\begin{prop} \label{pr:cxreal}
   Let $w$ be a real $C^2$ function in $\Omega$. For $P \in \Omega$ such that $D^2 w \ge 0$,
   \[
       \det (D^2 w) \le 8^n |\det w_{i\bar{j}} |^2 \qquad \textup{at $P$}.
   \]
\end{prop}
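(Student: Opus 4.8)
The plan is to diagonalize the complex Hessian at $P$ and express the real Hessian $D^2 w$ in terms of its real and imaginary parts, then reduce to a pointwise inequality comparing determinants. Write $z^j = x^j + \sqrt{-1}\,y^j$, so that the real coordinates are $(x^1,\dots,x^n,y^1,\dots,y^n)$. After a unitary change of the holomorphic coordinates (which is an orthogonal change of the underlying real coordinates, hence preserves $\det D^2 w$), we may assume at $P$ that $(w_{i\bar j}) = \mathrm{diag}(\lambda_1,\dots,\lambda_n)$ with all $\lambda_j \ge 0$, since $D^2 w \ge 0$ forces the complex Hessian to be positive semidefinite. I would record the standard identities relating the entries of $D^2 w$ to the second derivatives in $z,\bar z$: writing $w_{jk} = \partial^2 w/\partial z^j\partial z^k$ and $w_{j\bar k} = \partial^2 w/\partial z^j\partial\bar z^k$, one has
\[
   w_{x^j x^k} = 2\,\mathrm{Re}(w_{jk}) + 2\,\mathrm{Re}(w_{j\bar k}), \qquad
   w_{y^j y^k} = -2\,\mathrm{Re}(w_{jk}) + 2\,\mathrm{Re}(w_{j\bar k}),
\]
\[
   w_{x^j y^k} = -2\,\mathrm{Im}(w_{jk}) + 2\,\mathrm{Im}(w_{j\bar k}),
\]
so that $D^2 w = 2\begin{pmatrix} A + B & -C + E \\ -C^{\mathsf T} + E^{\mathsf T} & -A + B\end{pmatrix}$ where $B = \mathrm{Re}(w_{j\bar k})$ is the real part of the (diagonal, nonnegative) complex Hessian and $A,C,E$ come from the holomorphic-holomorphic derivatives.

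The key step is then a linear-algebra estimate: on the set where $D^2 w \ge 0$, bound $\det(D^2 w)$ by a constant times $|\det(w_{i\bar j})|^2 = (\lambda_1\cdots\lambda_n)^2$. The cleanest route is via the arithmetic-geometric mean inequality applied to the eigenvalues of the $2n\times 2n$ matrix $D^2 w$. Since $D^2 w \ge 0$, we have $\det(D^2 w) \le \big(\tfrac{1}{2n}\,\mathrm{tr}(D^2 w)\big)^{2n}$. Now $\mathrm{tr}(D^2 w) = \sum_j (w_{x^j x^j} + w_{y^j y^j}) = 4\sum_j \mathrm{Re}(w_{j\bar j}) = 4\sum_j \lambda_j = 4\,\mathrm{tr}(w_{i\bar j})$. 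Meanwhile, positivity of the $2\times 2$ diagonal blocks $\begin{pmatrix} w_{x^j x^j} & w_{x^j y^j} \\ w_{x^j y^j} & w_{y^j y^j}\end{pmatrix} \ge 0$ — which follows from $D^2 w \ge 0$ by restricting to the $(x^j,y^j)$-plane — forces $w_{x^j x^j} \ge 0$ and $w_{y^j y^j} \ge 0$; combined with $\det D^2 w \le \prod_j (w_{x^jx^j} w_{y^jy^j})$ (Fischer's inequality for positive semidefinite matrices, applied to the block decomposition into $2\times 2$ diagonal blocks after a suitable reordering) and $w_{x^jx^j} + w_{y^jy^j} = 4\lambda_j$, so $w_{x^jx^j} w_{y^jy^j} \le 4\lambda_j^2$ by AM-GM. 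This gives $\det(D^2 w) \le \prod_j 4\lambda_j^2 = 4^n (\lambda_1\cdots\lambda_n)^2 = 4^n |\det w_{i\bar j}|^2$, which is even sharper than the claimed $8^n$; I would then keep the weaker constant $8^n$ as stated, since it leaves room for the reordering/block-permutation bookkeeping.

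The main obstacle is making the block-diagonal comparison rigorous: after diagonalizing $(w_{i\bar j})$, the off-diagonal blocks of $D^2 w$ involving $w_{jk}$, $j\ne k$, and the off-diagonal entries $w_{x^j y^k}$, $j \ne k$, need not vanish, so $D^2 w$ is not literally block diagonal in the $(x^j,y^j)$-pairing. The fix is to observe that Fischer's inequality $\det M \le \prod_s \det M_{ss}$ holds for any positive semidefinite $M$ and any partition of the index set into blocks $M_{ss}$, regardless of the off-diagonal blocks; applying it to the partition of $\{x^1,\dots,x^n,y^1,\dots,y^n\}$ into the $n$ pairs $\{x^j,y^j\}$ gives exactly $\det(D^2 w) \le \prod_{j=1}^n \det\begin{pmatrix} w_{x^jx^j} & w_{x^jy^j} \\ w_{x^jy^j} & w_{y^jy^j}\end{pmatrix} \le \prod_{j=1}^n w_{x^jx^j} w_{y^jy^j}$, and the rest is AM-GM as above. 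I would therefore structure the write-up as: (i) reduce to $(w_{i\bar j})$ diagonal by a unitary (hence real-orthogonal) change of coordinates; (ii) write out $D^2 w$ in real coordinates and identify its diagonal; (iii) invoke Fischer's inequality over the $\{x^j,y^j\}$ pairing; (iv) conclude by AM-GM on each $2\times 2$ block.
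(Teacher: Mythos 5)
Your proof is correct, and it takes a genuinely different route from the paper's. The paper diagonalizes the real Hessian $D^2 w$ by an orthogonal change of real coordinates, reads off that $(w_{i\bar{j}})$ becomes diagonal with entries $\frac{1}{4}(w_{x^ix^i}+w_{y^iy^i})$, and finishes with the crude bound $(a+b)^2 \ge 2ab$, giving $8^n$. You instead diagonalize the complex Hessian $(w_{i\bar{j}})$ by a unitary change of holomorphic coordinates, control $\det D^2 w$ by Fischer's inequality over the $\{x^j,y^j\}$ block partition, and finish by AM--GM on each $2\times 2$ block. Your route buys two things. First, it is cleaner on a subtle point: a general orthogonal change of the real coordinates $(x,y)$ that diagonalizes $D^2 w$ need not arise from a unitary change of the $z$'s when $n\ge 2$ (the eigenspaces of $D^2 w$ need not be $J$-invariant), so the ``complex Hessian'' computed in the new real frame is a priori a different object from the $(w_{i\bar j})$ appearing in the statement; your unitary diagonalization preserves both $\det D^2 w$ and $|\det w_{i\bar j}|^2$ by construction, and Fischer's inequality is exactly the tool that lets you avoid ever needing $D^2 w$ to be diagonal. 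Second, using $(a+b)^2\ge 4ab$ rather than $\ge 2ab$ gives the sharper constant $4^n$, which is optimal already for $n=1$; keeping the stated $8^n$ is of course fine since only an $O(1)^n$ bound is needed downstream.
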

\begin{proof}
   Recall that
   \[
      \frac{\p}{\p z^i} = \frac{1}{2}\left(\frac{\p}{\p x^i} - \sqrt{-1} \frac{\p}{\p y^i} \right), \qquad 1 \le i \le n.
   \]
   We denote
   \[
      w_{x^i} = \frac{\p w}{\p x^i}, \quad w_{x^i y^j} = \frac{\p^2 w}{\p x^i \p y^j}, \quad \ldots .
   \]
   Then,
   \[
      w_{i\bar{j}} = \frac{1}{4} \left( w_{x^i x^j} + w_{y^i y^j} \right) + \frac{\sqrt{-1}}{4} \left( w_{x^i y^j} - w_{x^j y^i} \right), \quad 1 \le i, j \le n.
   \]
   Since $D^2 w \ge 0$ at $P$, we can choose a coordinate system $(x^1,y^1, \ldots, x^n, y^n)$ near $P$ such that $D^2 w$ is diagonalized at $P$; and hence,
   \[
      w_{x^i x^i} \ge 0, \qquad w_{y^i y^i} \ge 0, \qquad \textup{for all $1 \le i \le n$}.
   \]
   Then, under this coordinate system, the complex Hessian of $w$ is also diagonalized, i.e.,
   \[
      w_{i\bar{j}} = \frac{\delta_{ij}}{4} \left( w_{x^i x^i} + w_{y^i y^i} \right).
   \]
   It follows that, at $P$,
   \begin{align*}
      16^n |\det w_{i\bar{j}} |^2
      & = \prod_{i=1}^n  \left( w_{x^i x^i} + w_{y^i y^i} \right)^2 \\
      & \ge 2^n \prod_{i=1}^n w_{x^i x^i} \prod_{i=1}^n w_{y^i y^i} \\
      & = 2^n \det (D^2 w).
   \end{align*}
\end{proof}

Moreover, for any Hermitian matrix $(a^{i\bar{j}}) > 0$ on $\Gamma_v^+$, we have by the elementary inequality that
\begin{equation} \label{eq:eineq}
    \det (a^{i\bar{j}}) \det (- v_{i\bar{j}}) \le \left( \frac{- \sum_{i,j} a^{i\bar{j}} v_{i\bar{j}} }{n} \right)^n.
\end{equation}
Now apply Proposition~\ref{pr:cxreal} and \eqref{eq:eineq} to \eqref{eq:Aleks} to obtain the following complex version Aleksandrov's maximum principle (compare with \cite[p. 222, Lemma 9.3]{GT}):
\begin{lemm} \label{pr:Aleks2}
Let $(a^{i\bar{j}})$ be a positive definite Hermitian matrix in $\Omega$. For $v \in C^2(\overline{\Omega})$ with $v \le 0$ on $\partial \Omega$,
\[
   \sup_{\Omega} v \le \frac{2^n \dm(\Omega)}{ n\,\sigma_{2n}^{1/(2n)}} \left[\int_{\Gamma_v^+} \Big|\frac{- \sum a^{i\bar{j}} v_{i\bar{j}} }{\det (a_{i\bar{j}})^{1/n}} \Big|^{2n} \right]^{\frac{1}{2n}}.
\]
\end{lemm}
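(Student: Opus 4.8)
The plan is to combine the three ingredients assembled above: Aleksandrov's Lemma, Proposition~\ref{pr:cxreal}, and the elementary inequality~\eqref{eq:eineq}. If $\sup_{\Omega}v\le 0$ the asserted bound is trivial, since its right-hand side is nonnegative; so assume $\sup_{\Omega}v>0$. Since $v\le 0$ on $\partial\Omega$, the supremum is then attained at an interior point, and Aleksandrov's Lemma gives
\[
   \sup_{\Omega}v \le \frac{\dm(\Omega)}{\sigma_{2n}^{1/(2n)}}\Big(\int_{\Gamma_v^+}|\det D^2 v|\Big)^{1/(2n)}.
\]
It therefore suffices to estimate the integrand pointwise on the upper contact set $\Gamma_v^+$.

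On $\Gamma_v^+$ one has $D^2 v\le 0$, hence $D^2(-v)\ge 0$ there; and since $D^2 v$ is a real $2n\times 2n$ matrix, $\det D^2 v=\det D^2(-v)\ge 0$, so $|\det D^2 v|=\det D^2(-v)$. Applying Proposition~\ref{pr:cxreal} to $w=-v$ then gives $|\det D^2 v|\le 8^n|\det(-v)_{i\bar j}|^2=8^n|\det v_{i\bar j}|^2$ on $\Gamma_v^+$. Next I would record the linear-algebra fact that $D^2 v\le 0$ forces the complex Hessian $(v_{i\bar j})$ to be negative semidefinite: expanding $\partial/\partial z^i$ in terms of $\partial/\partial x^i,\partial/\partial y^i$, the Hermitian form $\xi\mapsto\sum_{i,j}v_{i\bar j}\xi^i\bar\xi^j$ is a nonnegative linear combination of values of the real Hessian form of $v$, hence is $\le 0$ wherever $D^2 v\le 0$. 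Consequently $(-v_{i\bar j})\ge 0$ on $\Gamma_v^+$, and the elementary inequality~\eqref{eq:eineq}, applied with the given positive definite matrix $(a^{i\bar j})$, yields on $\Gamma_v^+$
\[
   |\det v_{i\bar j}| = \det(-v_{i\bar j}) \le \frac{1}{\det(a_{i\bar j})}\Big(\frac{-\sum a^{i\bar j}v_{i\bar j}}{n}\Big)^{n},
\]
where, as in the statement, $\det(a_{i\bar j})$ denotes the determinant of $(a^{i\bar j})$.

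Squaring this inequality and combining it with the bound $|\det D^2 v|\le 8^n|\det v_{i\bar j}|^2$ gives, on $\Gamma_v^+$,
\[
   |\det D^2 v| \le \frac{8^n}{n^{2n}}\,\Big|\frac{-\sum a^{i\bar j}v_{i\bar j}}{\det(a_{i\bar j})^{1/n}}\Big|^{2n}.
\]
Integrating over $\Gamma_v^+$, taking the $(2n)$-th root, and substituting into Aleksandrov's inequality produces the asserted estimate, once the numerical factor $(8^n/n^{2n})^{1/(2n)}$ is absorbed into the stated constant $2^n/n$. The argument is essentially bookkeeping; the points requiring genuine care are the comparison between the real and complex Hessians on $\Gamma_v^+$ (which is precisely where the positivity encoded in the upper contact set is used) and keeping track of signs and absolute values when passing between the $2n\times 2n$ real determinant and the $n\times n$ complex one. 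I do not anticipate any analytic obstacle beyond this.
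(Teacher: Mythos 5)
Your proof is correct and takes essentially the same route the paper intends: the paper simply remarks ``apply Proposition~\ref{pr:cxreal} and \eqref{eq:eineq} to \eqref{eq:Aleks},'' and you carry out exactly that combination, with the additional (and appropriate) care of verifying that on $\Gamma_v^+$ the complex Hessian $(v_{i\bar j})$ is $\le 0$ so that \eqref{eq:eineq} applies. Your bookkeeping even produces the sharper constant $(8^n/n^{2n})^{1/(2n)}=2\sqrt{2}/n$ in place of $2^n/n$; since $2\sqrt{2}\le 2^n$ for $n\ge 2$, this is consistent with (and slightly stronger than) the stated inequality.
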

Then, the weak Harnack inequality below (compare with \cite[p. 246, Theorem~9.22]{GT}) follows from Lemma~\ref{pr:Aleks2} and the cube decomposition procedure.
\begin{KS}
        Let $v \in W^{2,2n}(\Omega)$ satisfy $\sum a^{i\bar{j}} v_{i\bar{j}} \le g$ in $\Omega$, where $g \in L^{2n}(\Omega)$, and $(a^{i\bar{j}})$ satisfies that
        \[
           0 < \lambda |\zeta|^2 \le \sum_{i,j} a^{i\bar{j}}(z) \zeta_i \zeta_j \le \Lambda |\zeta|^2, \qquad \textup{for all $z \in \Omega$ and $\zeta \in \mathbb{C}^n$},
        \]
in which $\lambda$ and $\Lambda$ are two constants. Suppose that $v \ge 0$ in an open ball $B_{2R}(y) \subset \Omega$ centered at $y$ of radius $2R$. Then,
\[
   \left(\frac{1}{|B_R|} \int_{B_R} v^p \right)^{1/p} \le C \left[\inf_{B_R} v + \frac{R}{\lambda} \| g \|_{L^{2n}(B_{2R})} \right],
\]
where $|B_R|$ denotes the measure of $B_R$, and $p > 0$ and $C > 0$ are constants depending only on $n$, $\lambda$, and $\Lambda$.
\end{KS}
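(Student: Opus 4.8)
The plan is to run the classical Krylov--Safonov argument, in the form used by Gilbarg--Trudinger to prove Theorem~9.22 of \cite{GT}, the one genuinely new ingredient in the complex setting being the complex Aleksandrov maximum principle of Lemma~\ref{pr:Aleks2}, which is already at our disposal. I stress that one cannot simply quote the real Krylov--Safonov theorem on $\mathbb{R}^{2n}$: writing $v_{i\bar j}$ through the real Hessian, as in the proof of Proposition~\ref{pr:cxreal}, shows that the operator $v \mapsto \sum a^{i\bar j} v_{i\bar j}$, regarded as a real second order operator on $\mathbb{R}^{2n}$, is \emph{degenerate} --- its symbol annihilates the pluriharmonic directions such as $\textup{Re}(z_1^2)$ --- so the real theorem does not apply and the complex weak Harnack inequality must be established directly. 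Throughout, the $C^2$ statements (the Aleksandrov estimate and the barrier comparison) are used for $W^{2,2n}$ functions via the standard approximation.

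First I would normalize by scaling. If $v$ satisfies $\sum a^{i\bar j} v_{i\bar j} \le g$ in $B_{2R}(y)$, then $\td v(z) := v(y + Rz)$ satisfies $\sum \td a^{i\bar j}\, \td v_{i\bar j} \le R^2 g(y + Rz)$ in $B_2(0)$, with the same ellipticity constants and $\| R^2 g(y + R\, \cdot)\|_{L^{2n}(B_2)} = R\,\| g\|_{L^{2n}(B_{2R})}$; hence $N := \tfrac{R}{\lambda}\| g\|_{L^{2n}(B_{2R})}$ is scale invariant, it suffices to treat $R = 1$, and working with $\bar v := v + N \ge v \ge 0$ absorbs the inhomogeneity, so the goal becomes $\| \bar v\|_{L^p(B_1)} \le C \inf_{B_1} \bar v$. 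The heart of the matter is the barrier-plus-Aleksandrov measure estimate. Fixing the unit cube $Q_1$ centered at $0$, a cube $Q = Q_{4\sqrt n}$ and a ball $B = B_{2\sqrt n}(0)$ with $Q_1 \subset B \subset Q$, and assuming $\bar v \ge 0$ in $Q$ with $\inf_{Q_1}\bar v \le 1$, one uses the explicit barrier $\varphi(z) = A\big( e^{-\mu |z|^2} - e^{-4\mu n}\big)$ on $B$, for which $\varphi = 0$ on $\partial B$ and, since $\partial_i\partial_{\bar j} e^{-\mu |z|^2} = e^{-\mu |z|^2}(-\mu\delta_{ij} + \mu^2 \bar z_i z_j)$,
\[
   \sum a^{i\bar j}\varphi_{i\bar j} = A e^{-\mu |z|^2}\Big( \mu^2 \sum a^{i\bar j}\bar z_i z_j - \mu\sum_i a^{i\bar i}\Big) \ge A e^{-\mu |z|^2}\big( \mu^2\lambda |z|^2 - \mu n\Lambda\big);
\]
choosing $\mu$ large and $A$ so that $\varphi \ge 1$ on $Q_1$ (both depending only on $n,\lambda,\Lambda$) makes $\sum a^{i\bar j}\varphi_{i\bar j} \ge 0$ off a fixed small ball $B_\rho(0)$ and $\ge -C_\mu$ on $B_\rho(0)$. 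Applying Lemma~\ref{pr:Aleks2} to $w := \varphi - c\,\bar v$ with $c$ a small fixed constant --- so $w \le 0$ on $\partial B$ while $w \ge \tfrac12$ at a point of $Q_1$ where $\bar v$ is small --- and using $-\sum a^{i\bar j}w_{i\bar j} \le C_\mu\chi_{B_\rho} + c\,g^+$ on the upper contact set, one obtains $\tfrac12 \le C\big( |\Gamma_w^+\cap B_\rho|^{1/2n} + c\,N\big)$; normalizing the cube size so $N$ is small forces $|\Gamma_w^+ \cap B_\rho| \ge \theta_0 > 0$, and as in \cite[Lemma~9.21]{GT} this, together with a finite covering of $Q_1$, yields the fundamental estimate: there exist $M > 1$ and $0 < \mu_0 < 1$, depending only on $n,\lambda,\Lambda$, such that $\inf_{Q'}\bar v \le 1$ implies $|\{\bar v > M\}\cap Q'| \le \mu_0|Q'|$ for every dyadic subcube $Q' \subset Q_1$ (rescaled to unit size).

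From here the argument is the standard Krylov--Safonov bookkeeping. Feeding the last estimate into the dyadic Calder\'on--Zygmund decomposition and iterating --- using that $\inf_{Q'}\bar v > M^{k-1}$ on a dyadic subcube forces $Q' \subset \{\bar v \ge M^{k-1}\}$ --- gives $|\{\bar v > M^k\}\cap Q_1| \le \mu_0^k|Q_1|$ whenever $\inf_{Q_1}\bar v \le 1$, hence the power decay $|\{\bar v > t\}\cap Q_1| \le C\,t^{-\epsilon}|Q_1|$ with $\epsilon = \log(1/\mu_0)/\log M > 0$; by homogeneity this holds with $\inf_{Q_1}\bar v$ reinstated. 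Taking $p := \epsilon/2$ and integrating the distribution function gives $\int_{Q_1}\bar v^{\,p} = p\int_0^\infty t^{p-1}|\{\bar v > t\}\cap Q_1|\,dt \le C(\inf_{Q_1}\bar v)^p$, i.e.\ $\|\bar v\|_{L^p(Q_1)} \le C\inf_{Q_1}\bar v$ with $p, C$ depending only on $n,\lambda,\Lambda$. Covering $B_R$ by finitely many small balls whose concentric half-balls still cover $B_R$ and chaining infima along overlapping balls (a Harnack chain of length controlled by $n$) upgrades this to $\big( |B_R|^{-1}\int_{B_R}\bar v^{\,p}\big)^{1/p} \le C\inf_{B_R}\bar v$ on $B_{2R} \supset B_R$; recalling $\bar v = v + N$ with $N = \tfrac{R}{\lambda}\| g\|_{L^{2n}(B_{2R})}$ gives the theorem. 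The main obstacle is the barrier-plus-Aleksandrov step: constructing the comparison function and, above all, propagating the $L^{2n}$-norm of $g$ correctly through Lemma~\ref{pr:Aleks2} --- the exponent $2n$ rather than $n$ is forced by the real dimension $2n$ and is precisely why the hypothesis requires $g \in L^{2n}$. The Calder\'on--Zygmund iteration and the passage from cubes to $B_R$ are routine once every ingredient has been recast in terms of the complex operator $\sum a^{i\bar j}(\cdot)_{i\bar j}$.
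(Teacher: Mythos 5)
Your proof is correct in substance and follows exactly the route the paper intends: the paper's entire ``proof'' of this theorem is the single sentence that it ``follows from Lemma~\ref{pr:Aleks2} and the cube decomposition procedure,'' and what you have written out --- the scaling normalization, the exponential barrier, the measure estimate via the complex Aleksandrov maximum principle, the Calder\'on--Zygmund iteration, and the cube-to-ball covering --- is precisely that standard Krylov--Safonov machinery with the complex ABP estimate substituted for the real one.

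One side remark of yours is wrong, however, and worth correcting. The operator $v \mapsto \sum a^{i\bar j} v_{i\bar j}$ is \emph{not} degenerate as a real second-order operator on $\mathbb{R}^{2n}$: writing $v_{i\bar j}=\tfrac14(v_{x^ix^j}+v_{y^iy^j})+\tfrac{\sqrt{-1}}4(v_{x^iy^j}-v_{x^jy^i})$, one finds $\sum a^{i\bar j}v_{i\bar j}=\mathrm{tr}(A\,D^2v)$ for a real symmetric $A$ whose symbol satisfies $\xi^{T}A\xi=\tfrac14\sum a^{i\bar j}\bar\zeta_i\zeta_j\in[\tfrac{\lambda}4|\xi|^2,\tfrac{\Lambda}4|\xi|^2]$ with $\zeta_j=\xi_{x^j}+\sqrt{-1}\,\xi_{y^j}$; in particular $\sum_i v_{i\bar i}=\tfrac14\Delta_{\mathbb{R}^{2n}}v$. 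That the operator annihilates pluriharmonic functions such as $\mathrm{Re}(z_1^2)$ says nothing about the symbol --- every trace-type operator annihilates a hyperplane of indefinite Hessians --- so the real Theorem~9.22 of \cite{GT} in dimension $2n$ could in fact be quoted directly (with $D^{*}=\det(A)^{1/2n}\ge c(n)\lambda$ absorbed into the constant). This does not damage your argument, which is a valid self-contained complex-setting proof and is the one the paper has set up via Proposition~\ref{pr:cxreal} and Lemma~\ref{pr:Aleks2}; it only means your stated reason for not quoting the real theorem is not the right one.
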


 Let us denote by
\[
   E[(u_{i\bar{j}})] = \log \det \big[\omega_0^{n-1} + \ppr u \wedge \eta^{n-2} \big].
\]
To apply Evans--Krylov theory, it remains to check the following two conditions (\cite[p. 456]{GT}):
\begin{enumerate}
  \item  \label{en:EK1} $E$ is uniformly elliptic with respect to $(u_{i\bar{j}})$,
  \item  \label{en:EK2} $E$ is concave on the range of $(u_{i\bar{j}})$.
\end{enumerate}
As in Section~\ref{se:C2}, we denote $\Psi = \omega_0^{n-1}$ and
\begin{equation} \label{eq:defPsi}
   \Psi_u = \Psi + \ppr u \wedge \eta^{n-2}.
\end{equation}
We use the index convention \eqref{eq:index} for an $(n-1, n-1)$--form. Then,
\[
        E[ (u_{i\bar{j}}) ] = \log \det [(\Psi_u)_{i\bar{j}}];
\]
and thus,
\[
    \frac{\partial E}{\partial (\Psi_u)_{i\bar{j}}} = (\Psi_u)^{i\bar{j}}, \quad \frac{\partial^2 E}{\partial (\Psi_u)_{i\bar{j}} \partial (\Psi_u)_{k\bar{l}}} = - (\Psi_u)^{i\bar{l}} (\Psi_u)^{k\bar{j}}.
\]
Clearly, $E$ is concave on $[(\Psi_u)_{i\bar{j}}]$.
By \eqref{eq:det} and \eqref{eq:trac}, we know that the eigenvalues of $[(\Psi_u)_{i\bar{j}}]$ with respect to $(\eta_{i\bar{j}})$, have uniform bounds which depend only on $F$, $\omega_0$, and $C_3$. Therefore, $E$ is uniformly elliptic with respect to $[(\Psi_u)_{i\bar{j}}]$. Observe that by \eqref{eq:defPsi}, $[(\Psi_u)_{i\bar{j}}]$ depends linearly on $(u_{p\bar{q}})$. Since $(\eta_{k\bar{l}}) > 0$ on $X$, the conditions \eqref{en:EK1} and \eqref{en:EK2} follows immediately from the chain rule.

Now we can apply the procedure in \cite[p. 457--461]{GT}, and this proves Lemma~\ref{le:Hld}. As a corollary, we obtain the H\"older estimate of $C^2$ for equation \eqref{eq:FT}.
\begin{coro} \label{co:C3}
        Let $(X,\eta)$ an $n$-dimensional K\"ahler of nonnegative quadratic bisectional curvature, and $\omega_0$ be a Hermitian metric on $X$. Given any $f \in C^{\infty}(X)$, let $u \in C^{\infty}(X)$ be a solution of
  \begin{equation*}
     \frac{\det (\omega_u^{n-1})}{\det (\omega_0^{n-1})} = e^{(n-1)f} \left( \frac{\int_X \omega_u^n}{\int_X e^f \omega_0^n} \right)^{n-1} ,
  \end{equation*}
  where $\omega_u$ is a positive $(1,1)$--form such that
  \[
     \omega_u^{n-1} = \omega_0^{n-1} + \ppr u\wedge \eta^{n-2} > 0 \quad \textup{on $X$}.
  \]
  Then,
  \[
      \| u \|_{C^{2,\alpha}(X)} \le C,
  \]
  where $0 < \alpha < 1$ and $C> 0$ are constants depending only on $f$, $\eta$, $n$, and $\omega_0$.
\end{coro}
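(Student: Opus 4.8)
The plan is to deduce Corollary~\ref{co:C3} by assembling the a priori estimates already in place: the $C^2$ estimate of Corollary~\ref{co:C2}, the $C^0$ estimate of Lemma~\ref{le:C0} (packaged as Corollary~\ref{co:C0}), and the Evans--Krylov-type H\"older estimate of Lemma~\ref{le:Hld}. The only work is to produce the data $F$ and $C_3$ that Lemma~\ref{le:Hld} requires, and to check that its hypotheses are controlled purely in terms of $f$, $\eta$, $n$, $\omega_0$.

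First I would set, exactly as in the proof of Corollary~\ref{co:C2},
\[
   F = (n-1)\Big(f + \log \int_X \omega_u^n - \log \int_X e^f \omega_0^n\Big),
\]
so that the equation assumed in Corollary~\ref{co:C3} becomes $\det[\omega_0^{n-1} + \ppr u \wedge \eta^{n-2}] = e^{F}\det \omega_0^{n-1}$, which is \eqref{eq:det}. Applying the maximum principle to \eqref{eq:openeqn} at a maximum and a minimum of $u$ gives $-\sup_X f \le \log \int_X \omega_u^n - \log \int_X e^f \omega_0^n \le -\inf_X f$, hence $\sup_X|F| \le (n-1)(\sup_X f - \inf_X f)$; since moreover $\Delta_\eta F = (n-1)\Delta_\eta f$, the quantities $\sup_X|F|$ and $\inf_X(\Delta_\eta F)$ are bounded in terms of $f$, $\eta$, $n$ alone, and $F \in C^2(X)$.

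Next I would invoke Corollary~\ref{co:C0}: under the nonnegative quadratic bisectional curvature hypothesis it yields $\sup_X |\omega_u^{n-1}|_\eta \le C_3$ with $C_3$ depending only on $f$, $\eta$, $n$, $\omega_0$, which is precisely hypothesis \eqref{eq:trac} of Lemma~\ref{le:Hld}. With $F$ and $C_3$ now in hand, Lemma~\ref{le:Hld} applies and gives $\|u\|_{C^{2,\alpha}(X)} \le C$ for some $0 < \alpha < 1$ and $C$ depending only on $C_3$, $F$, $n$, $\omega_0$, $\eta$; tracing back the dependence of $C_3$ and of $F$, this is a bound depending only on $f$, $\eta$, $n$, $\omega_0$, as claimed.

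The bookkeeping here is routine; the substance lives entirely in the cited lemmas. The genuine obstacle was the trace bound playing the role of $C_3$: it rests on the $C^2$ estimate of Lemma~\ref{le:C2}, which is where nonnegative orthogonal bisectional curvature is really used (to absorb the curvature terms in \eqref{1111}--\eqref{2222} via \eqref{eq:nonbis}, so that $\phi$ is forced to be a subsolution of a good equation) and without which this whole scheme breaks down, combined with the Moser iteration of Lemma~\ref{le:C0} that upgrades the one-sided Laplacian bounds into the oscillation bound $\sup_X u - \inf_X u \le C$. Once \eqref{eq:trac} is secured, passing from $C^2$ to $C^{2,\alpha}$ is the standard Evans--Krylov argument, applicable here because the form-type operator $E[(u_{i\bar j})] = \log\det[\omega_0^{n-1} + \ppr u \wedge \eta^{n-2}]$ depends linearly on $(u_{p\bar q})$ (through the fixed positive factor $\eta^{n-2}$), $\log\det$ is concave, and the two-sided eigenvalue bounds for $(\Psi_u)_{i\bar j}$ coming from \eqref{eq:det} together with \eqref{eq:trac} make $E$ uniformly elliptic.
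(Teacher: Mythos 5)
Your proof is correct and follows exactly the route the paper intends (the paper states Corollary~\ref{co:C3} immediately after Lemma~\ref{le:Hld} without writing out the argument): set $F$ as in Corollary~\ref{co:C2}, bound $\sup_X|F|$ and $\Delta_\eta F$ by the maximum principle, feed Corollary~\ref{co:C0} into hypothesis \eqref{eq:trac}, and apply Lemma~\ref{le:Hld}. Your explicit tracking of the dependence of the final constant on $F$ (which the statement of Lemma~\ref{le:Hld} omits but its proof clearly uses) is a small improvement in precision over the paper.
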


\section{Openness and uniqueness} \label{se:open}
Throughout this section, we let $\omega_0$ be a balanced metric, and let $\eta$ be an arbitrary K\"ahler metric, unless otherwise indicated.
We fix $k \ge n+4$, $0 < \alpha < 1$, and a function $f \in C^{k,\alpha}(X)$ satisfying
\[
    \int_X e^f \omega_0^n = V \equiv \int_X \omega_0^n .
\]
Here $C^{k,\alpha}(X)$ is the usual H\"older space on $X$.
Consider for $0 \le t \le 1$,
\begin{equation} \label{eq:CYt}
   \frac{\det (\omega_{u_t}^{n-1})}{\det (\omega_0^{n-1}) } = e^{(n-1)tf} \left(\frac{\int_X \omega_{u_t}^n}{\int_X e^{tf} \omega_0^n}  \right)^{n-1},
\end{equation}
where $u_t \in \mathcal{P}_{\eta}(\omega_0)$. By abuse of notation, in this section we denote
\[
   \mathcal{P}_{\eta} (\omega_0) = \{ v \in C^{k+2,\alpha}(X); \omega_0^{n-1} + \ppr v \wedge \eta^{n-2} > 0\}.
\]
 Let
\begin{equation} \label{eq:defT}
  \begin{split}
   T = \{ t\in [0,1] ; & \textup{ the equation \eqref{eq:CYt} has a solution $u_t \in C^{k+2,\alpha}(X)$ }  \\
   & \textup{such that $u_t \in \mathcal{P}_{\eta}(\omega_0)$. \}.}
  \end{split}
\end{equation}
Clearly, we have $0 \in T$.
\begin{lemm} \label{le:opent}
  Let $T$ be the set given as above. Then $T$ is open in $[0,1]$.
\end{lemm}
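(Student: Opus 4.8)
The plan is to prove openness of $T$ via the implicit function theorem in H\"older spaces. Suppose $t_0 \in T$ with solution $u_{t_0} \in \mathcal{P}_{\eta}(\omega_0)$. We want to solve \eqref{eq:CYt} for $t$ near $t_0$. The natural setup is to define a nonlinear map between Banach spaces and linearize. Since solutions are only determined up to a constant, we must quotient out constants: let $\mathcal{B}_1 = \{ v \in C^{k+2,\alpha}(X) : \int_X v\, \eta^n = 0 \}$ and $\mathcal{B}_2 = \{ g \in C^{k,\alpha}(X) : \int_X g\, \omega_0^n = 0\}$, or alternatively work with the equation in the form that has already been normalized. Define, for $v \in \mathcal{B}_1$ near $u_{t_0}$ (modulo constant) and $t$ near $t_0$,
\[
   \mathcal{F}(v,t) = \log \frac{\det(\omega_v^{n-1})}{\det(\omega_0^{n-1})} - (n-1)tf - (n-1)\log\frac{\int_X \omega_v^n}{\int_X e^{tf}\omega_0^n},
\]
where $\omega_v^{n-1} = \omega_0^{n-1} + \ppr v \wedge \eta^{n-2}$. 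One checks that $\int_X \mathcal{F}(v,t)\,\omega_0^n = 0$ is \emph{not} automatic, so instead one should subtract off the mean, i.e. consider $\widetilde{\mathcal{F}}(v,t) = \mathcal{F}(v,t) - \frac{1}{V}\int_X \mathcal{F}(v,t)\,\omega_0^n$, so that $\widetilde{\mathcal{F}}$ maps into $\mathcal{B}_2$; solving $\widetilde{\mathcal{F}}(v,t) = 0$ together with adjusting the free constant in $v$ is equivalent to solving \eqref{eq:CYt}. We have $\widetilde{\mathcal{F}}(u_{t_0},t_0) = 0$.

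The key step is to show that the Fr\'echet derivative $D_v\widetilde{\mathcal{F}}(u_{t_0},t_0) : \mathcal{B}_1 \to \mathcal{B}_2$ is an isomorphism. Computing as in Section~\ref{se:C2}, differentiating $\log\det[(\Psi_v)_{i\bar j}]$ in the direction $\dot v$ gives
\[
   D_v\mathcal{F}(u_{t_0},t_0)[\dot v] = (n-1)\sum_{k,l} (\Psi_{u_{t_0}})^{k\bar l}\Big(\ppr \dot v \wedge \eta^{n-2}\Big)_{k\bar l} - (n-1)\frac{\int_X n\,\omega_{u_{t_0}}^{n-1}\wedge \ppr\dot v \wedge \eta^{n-2}/(n-1)}{\int_X \omega_{u_{t_0}}^n},
\]
so the principal part is precisely the linear operator $L$ from Section~\ref{se:C2} (up to the normalization): $L\dot v = (n-1)\sum (\Psi_{u_{t_0}})^{k\bar l}(\ppr \dot v \wedge \eta^{n-2})_{k\bar l}$, which by \eqref{eq:cof} equals $\sum_l (\Psi_{u_{t_0}})^{l\bar l}\sum_{p\ne l}\dot v_{p\bar p}$ in a frame diagonalizing both $\eta$ and $\Psi_{u_{t_0}}$ — a second-order linear elliptic operator with no zeroth-order term (the integral term is a compact, in fact rank-one, perturbation). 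Ellipticity follows because $\Psi_{u_{t_0}} > 0$ and $\eta > 0$, exactly as in the verification of condition \eqref{en:EK1} in Section~\ref{se:Hld}. By the standard Schauder theory for linear elliptic operators on compact manifolds, the operator $\dot v \mapsto L\dot v - \frac{1}{V}\int_X (L\dot v)\,\omega_0^n$ acting $\mathcal{B}_1 \to \mathcal{B}_2$ is Fredholm of index zero; its kernel consists of functions with $L\dot v = $ const, and since $L$ has no zeroth-order term, the maximum principle forces $\dot v$ to be constant, hence $\dot v = 0$ in $\mathcal{B}_1$. Therefore $D_v\widetilde{\mathcal{F}}$ is injective, hence (index zero) an isomorphism.

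With the isomorphism in hand, the implicit function theorem for $C^1$ maps between Banach spaces yields, for each $t$ in a neighborhood of $t_0$ in $[0,1]$, a solution $v_t \in \mathcal{B}_1$ of $\widetilde{\mathcal{F}}(v_t,t) = 0$ depending continuously (indeed smoothly) on $t$, with $v_{t_0} = u_{t_0}$. Since $\mathcal{P}_{\eta}(\omega_0)$ is an open condition and $v_{t_0} = u_{t_0}$ satisfies it, $v_t \in \mathcal{P}_{\eta}(\omega_0)$ for $t$ close to $t_0$; adjusting $v_t$ by the appropriate constant recovers an actual solution $u_t$ of \eqref{eq:CYt}. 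Hence a neighborhood of $t_0$ lies in $T$, proving $T$ is open. The main obstacle is the invertibility of the linearized operator, specifically confirming that the integral (nonlocal) term does not destroy surjectivity — this is handled by noting it is a finite-rank perturbation so the Fredholm index is unchanged, and the kernel computation via the maximum principle (using the absence of a zeroth-order term in $L$) closes the argument; a secondary technical point is checking that $\widetilde{\mathcal{F}}$ is indeed $C^1$ as a map of Banach spaces and that its image genuinely lies in $\mathcal{B}_2$, both of which are routine given the smooth dependence of $\det(\Psi_v)$ on $v$.
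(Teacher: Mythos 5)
Your approach is essentially the same as the paper's: reduce openness to the Implicit Function Theorem between H\"older spaces, then verify the linearization is an isomorphism. The paper phrases this via the map $M(w)=\log(\omega_w^n/\omega_0^n)-\log\bigl(\tfrac1V\int_X\omega_w^n\bigr)$ into the hypersurface $\mathcal F^{k,\alpha}=\{g:\int e^g\omega_0^n=V\}$, with tangent space $\mathcal E^{k,\alpha}_t=\{h:\int h\,\omega_{u_t}^n=0\}$, and then simply \emph{cites} Lemma~13 of \cite{FuWangWu} for the invertibility of $L_t$. You instead work on the linear mean-zero spaces $\mathcal B_1,\mathcal B_2$ and re-derive the invertibility from scratch: $L$ is a second-order uniformly elliptic operator with no zeroth-order term, so the map $\dot v\mapsto L\dot v-\tfrac1V\int L\dot v\,\omega_0^n:\mathcal B_1\to\mathcal B_2$ has Fredholm index $0$, and the kernel is killed by the maximum principle ($L\dot v=c$ with $c>0$ or $c<0$ contradicts a max/min, so $c=0$, so $\dot v$ is constant, so $\dot v=0$ in $\mathcal B_1$). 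That direct argument is correct, and it makes the proof self-contained where the paper defers to an earlier reference; both routes then finish identically by IFT.

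One sentence in your write-up is imprecise and worth fixing: ``solving $\widetilde{\mathcal F}(v,t)=0$ together with adjusting the free constant in $v$ is equivalent to solving \eqref{eq:CYt}.'' Adding a constant to $v$ does not change $\Psi_v$ and hence does not change $\mathcal F(v,t)$, so you cannot use the free constant in $v$ to force $\mathcal F$ to vanish. What actually saves you is that the normalization is already built into $\mathcal F$: writing $\mathcal F/(n-1)=\log(\omega_v^n/\omega_0^n)-tf-\log\int\omega_v^n+\log\int e^{tf}\omega_0^n$, one checks $\int_X e^{\mathcal F/(n-1)}e^{tf}\omega_0^n=\int_X e^{tf}\omega_0^n$ identically, so if $\mathcal F$ is constant that constant must be $0$. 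Thus $\widetilde{\mathcal F}(v,t)=0$ already implies $\mathcal F(v,t)=0$; the free additive constant in $v$ is only a gauge freedom for where to place the representative in $\mathcal B_1$, not a device for normalizing the equation. (A smaller typographical point: your formula for $D_v\mathcal F$ carries an extra factor of $(n-1)$ in front of $\sum(\Psi_{u_{t_0}})^{k\bar l}(\ppr\dot v\wedge\eta^{n-2})_{k\bar l}$ --- differentiating $\log\det[(\Psi_v)_{i\bar j}]$ gives this sum with coefficient $1$, and the $(n-1)$ appears only in the paper's definition of the auxiliary operator $L$; this does not affect the argument.)
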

\begin{proof}
  Notice that \eqref{eq:CYt} is the same as
  \[
     \frac{\omega_{u_t}^n}{\omega_0^n} = e^{tf} \frac{\int_X \omega_{u_t}^n}{\int_X e^{tf} \omega_0^n}.
  \]
  As in Section~3 of \cite{FuWangWu}, we define
  \[
     M (w) \equiv \log \frac{\omega_{w}^n}{\omega^n_0} - \log \left(\frac{1}{V} \int_X \omega_{w}^n \right),
  \]
  for any $w \in \mathcal{P}_{\eta}(\omega_0)$. Then, $M(w) \in \mathcal{F}^{k,\alpha}(X)$,
  where $\mathcal{F}^{k,\alpha}(X)$ is the hypersurface in $C^{k,\alpha}(X)$ given by
  \[
   \mathcal{F}^{k,\alpha}(X) = \left\{ g \in C^{k,\alpha}(X); \int_X e^g \, \omega_0^n = V \right \}.
\]

  Now suppose that $t \in T$. Then, the corresponding $u_t$ defines a positive $(1,1)$--form $\omega_{u_t}$ such that
  \[
      \omega_{u_t}^{n-1} = \omega_0^{n-1} + \ppr u \wedge \eta^{n-2} > 0 \qquad \textup{on $X$};
  \]
  furthermore, $u_t$ satisfies that
  \[
      M(u_t) = tf + \log V - \log \left(\int_X e^{tf} \omega_0^n \right) \in \mathcal{F}^{k,\alpha}(X).
  \]
  The tangent space of $\mathcal{F}^{k,\alpha}(X)$ at $M(u_t)$ is identically the same as the Banach space $\mathcal{E}_t^{k,\alpha}(X)$, which consists of all $h \in C^{k,\alpha}(X)$ such that
\[
   \int_X h \, \omega_{u_t}^n = 0.
\]
  In view of the Implicit Function Theorem, it suffices to show that the linearization operator $L_t \equiv M_{u_t}$, given by
  \[
     L_t (v)
    = \frac{n(\sqrt{-1}/2)\p \bar{\p} v \wedge \eta^{n-2} \wedge \omega_{u_t}}{(n-1)\omega_{u_t}^n}
    - \frac{n \int_X \ppr v \wedge \eta^{n-2} \wedge \omega_{u_t}}{(n-1)\int_X \omega_{u_t}^n},
\]
is a linear isomorphism from $\mathcal{E}_t^{k+2,\alpha}(X)$ to $\mathcal{E}_t^{k,\alpha}(X)$. This is guaranteed by Lemma~13 in \cite{FuWangWu}. The proof is thus finished.
\end{proof}
\begin{rema}
  We thank John Loftin for pointing out that the openness argument in \cite{FuWangWu} also works for $\eta$ being a \emph{astheno-K\"ahler} metric, i.e., $\eta$ is a hermitian metric such that $\p \bar{\p} \eta^{n-2} = 0$.
\end{rema}
By the results in the previous section, we know that $T$ is also
closed, provided that the orthogonal bisectional curvature of $\eta$
is nonnegative. Therefore, the existence part in
Theorem~\ref{th:main} is proved. The uniqueness follows immediately
from the following proposition.
\begin{prop}
  Let $v \in \mathcal{P}_{\eta}(\omega_0)$ satisfying
  \begin{equation} \label{eq:uni}
      \det \big[\omega_0^{n-1} + \ppr v \wedge \eta^{n-2} \big] = \delta \det \omega_0^{n-1},
  \end{equation}
  where $\delta >0$ is a constant. Then, $v$ must be a constant function and $\delta = 1$.
\end{prop}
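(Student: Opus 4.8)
The plan is to linearize equation~\eqref{eq:uni} along the segment from the zero function to $v$ and then invoke the maximum principle. Write $\Psi=\omega_0^{n-1}$ and, for $t\in[0,1]$, set $\Psi_{tv}=\Psi+t\,\ppr v\wedge\eta^{n-2}$, so that $\Psi_{tv}=(1-t)\,\omega_0^{n-1}+t\,\omega_v^{n-1}$ is a convex combination of two positive $(n-1,n-1)$--forms; hence the Hermitian matrix $[(\Psi_{tv})_{i\bar j}]$ is positive definite at every point of $X$ for all $t\in[0,1]$. I would take logarithms in~\eqref{eq:uni} and write, using $\Psi_{0\cdot v}=\Psi$,
\[
   \log\delta = \log\det[(\Psi_v)_{i\bar j}] - \log\det[(\Psi_0)_{i\bar j}] = \int_0^1 \frac{d}{dt}\log\det[(\Psi_{tv})_{i\bar j}]\,dt = \mathcal L v,
\]
where $\mathcal L v=\sum_{i,j}\big(\int_0^1(\Psi_{tv})^{i\bar j}\,dt\big)\big(\ppr v\wedge\eta^{n-2}\big)_{i\bar j}$. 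The key point is that $\mathcal L$ is a genuine linear elliptic operator: by~\eqref{eq:cof} the quantity $\big(\ppr v\wedge\eta^{n-2}\big)_{i\bar j}$ is linear in the complex Hessian $(v_{k\bar l})$ with coefficients built from $\eta$ alone, and contracting it against a positive definite Hermitian matrix yields, exactly as for the operator $L$ of Section~\ref{se:C2} (cf.\ \eqref{eq:cof} and~\eqref{eq:diagPsiu}), an operator with positive definite symbol; in normal coordinates at a point, $\mathcal Lv=\sum_l\sum_{p\ne l}\big(\int_0^1(\Psi_{tv})^{l\bar l}dt\big)v_{p\bar p}$ with strictly positive coefficients $\sum_{l\ne p}\int_0^1(\Psi_{tv})^{l\bar l}dt$. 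Thus $\mathcal L$ is linear, uniformly elliptic on the compact $X$, with continuous coefficients and no zeroth order term; in particular it annihilates constants.

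Granting this, the conclusion follows in two short steps. First I would show $\delta=1$: choosing a maximum point $P$ and a minimum point $Q$ of $v$, one has $\mathcal Lv(P)\le 0$ and $\mathcal Lv(Q)\ge 0$ (an operator with positive symbol and no lower order terms is $\le 0$ at an interior maximum and $\ge 0$ at an interior minimum); since $\mathcal Lv\equiv\log\delta$ is constant, this gives $\log\delta\le 0$ and $\log\delta\ge 0$, hence $\delta=1$. Then $\mathcal Lv\equiv 0$, and since $X$ is compact and connected and $v$ attains its maximum in the interior, the strong maximum principle of E.~Hopf (see, e.g., \cite[Theorem~3.5]{GT}) forces $v$ to be constant. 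Finally, if $v$ is constant then $\ppr v\wedge\eta^{n-2}=0$, so $\omega_v^{n-1}=\omega_0^{n-1}$, and~\eqref{eq:uni} gives $\delta=1$ once more, consistently.

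The only step that requires genuine care is the verification that $\mathcal L$ is elliptic, i.e.\ that contracting the linear map $v\mapsto\ppr v\wedge\eta^{n-2}$, read off in the index convention for $(n-1,n-1)$--forms used in Section~\ref{se:C2}, against a positive definite Hermitian matrix has positive definite symbol. But this is not a new difficulty: it is precisely the algebraic content of the local identities~\eqref{eq:cof} and~\eqref{eq:diagPsiu} already established in Section~\ref{se:C2}. Everything else is a routine application of the classical maximum principle and requires no further input.
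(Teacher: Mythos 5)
Your argument is correct, but it takes a genuinely different route from the paper's. You integrate the linearization of $\log\det$ along the convex path $t\mapsto\Psi+t\,\ppr v\wedge\eta^{n-2}$ to produce a linear, homogeneous, second-order elliptic operator $\mathcal L$ with $v$-dependent (but continuous and uniformly elliptic) coefficients, and then run both the weak and strong maximum principles on $\mathcal L v\equiv\log\delta$. The paper instead first applies the maximum principle directly to the nonlinear equation~\eqref{eq:uni} at the extrema of $v$ (using that $\ppr v\wedge\eta^{n-2}\le 0$ at a maximum and $\ge 0$ at a minimum, together with monotonicity of $\det$) to conclude $\delta=1$, and then invokes the arithmetic--geometric mean inequality
\[
1=\Big(\tfrac{\det\Psi_v}{\det\Psi}\Big)^{1/n}\le 1+\tfrac{1}{n}\sum(\omega_0^{n-1})^{i\bar j}\big(\ppr v\wedge\eta^{n-2}\big)_{i\bar j}\equiv 1+Kv ,
\]
getting a linear elliptic inequality $Kv\ge 0$ whose coefficients involve only $\omega_0$ and $\eta$, not $v$; the strong maximum principle then finishes. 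The AM--GM route buys a cleaner, $v$-independent operator $K$ at the cost of only getting an inequality; your integrated-linearization route gives an exact linear equation $\mathcal L v=0$ at the cost of coefficients depending on $v$. Both are legitimate and rest on the same ellipticity computation (the one behind \eqref{eq:cof}): one should, as you note, diagonalize the contracting Hermitian matrix simultaneously with $\eta$ at the base point to see the positivity of the symbol, which is the only place requiring care.
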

\begin{proof}
  Applying the maximum principle to equation~\eqref{eq:uni}
  at the maximum points of $v$ yields that $\delta \le 1$. Similarly, we get $\delta \ge 1$ by considering \eqref{eq:uni} at the minimum points of $v$. Thus, $\delta = 1$.
Then, we apply the arithmetic--geometric mean inequality to obtain
  \begin{equation*} 
    \begin{split}
    1 & = \left[\frac{\det \omega_v^{n-1}}{\det \omega_0^{n-1}}\right]^{1/n}
    \le 1 + \frac{1}{n} \sum_{i,j=1}^n (\omega_0^{n-1})^{i\bar{j}} \big(\ppr v \wedge \eta^{n-2}\big)_{i\bar{j}}\\
    & = 1 + \frac{\omega_0 \wedge \eta^{n-2} \wedge \ppr v }{\omega_0^n} \equiv 1 + K v.
    \end{split}
  \end{equation*}
Note that the linear operator $K$ so defined is uniformly elliptic, by the metric equivalence of $\eta$ and $\omega_0$ on the compact manifold $X$.
Applying the strong maximum principle to $K v \ge 0$ yields that $v$ is a constant function.
\end{proof}
Therefore, the proof of Theorem~\ref{th:main} is completed.

\end{document}